\newtheorem{theorem}{Theorem}[section]
 \newtheorem*{theo}{Theorem}
\newtheorem{proposition}[theorem]{Proposition}
\theoremstyle{definition}
\newtheorem{definition}[theorem]{Definition}
\newtheorem{example}[theorem]{Example}
\theoremstyle{remark}
\numberwithin{equation}{section}
 \def\bsr{\operatorname{bsr}}
\def\tsr{\operatorname{tsr}}
\def\ind{\operatorname{ind}}
 \def\bs{\boldsymbol}
 \def\T{ \mathbb T}
 \def\R{ \mathbb R}
 \def\H{H^\infty}
 \def\D{{ \mathbb D}}
  \def\K{{ \mathbb K}}
 \def\C{{ \mathbb C}}
 \def\N{{ \mathbb N}}
  \def\F{{ \mathscr F}}
 \def\e{\varepsilon}
 \def\bs{\boldsymbol}
 \def\dis{\displaystyle}
 \def\union{\cup}
 \def\Union{\bigcup}
 \def\inter{\cap}
 \def\Inter{\bigcap }
 \def\ov{\overline}
 \def\ss{\subseteq}
 \def\emp{\emptyset}
 \def\buildrel#1_#2^#3{\mathrel{\mathop{\kern 0pt#1}\limits_{#2}^{#3}}}
 \def\ssi{\Longleftrightarrow}
\begin{document}
\setcounter{page}{1}

\title[]{The cone and cylinder algebra}

\author[R. Mortini, R. Rupp]{Raymond Mortini$^1$$^{*}$ and Rudolf Rupp$^2$}

\address{$^{1}$ Universit\'{e} de Lorraine,
 D\'{e}partement de Math\'{e}matiques et  
Institut \'Elie Cartan de Lorraine,  UMR 7502,
 Ile du Saulcy,
 F-57045 Metz, France}
\email{\textcolor[rgb]{0.00,0.00,0.84}{raymond.mortini@univ-lorraine.fr}}

\address{$^{2}$  Fakult\"at f\"ur Angewandte Mathematik, Physik  und Allgemeinwissenschaften,
 TH-N\"urnberg,
 Kesslerplatz 12, D-90489 N\"urnberg, Germany}
\email{\textcolor[rgb]{0.00,0.00,0.84}{Rudolf.Rupp@th-nuernberg.de}}

\subjclass[2010]{Primary 46J10, Secondary 46J15; 46J20; 30H50.}

\keywords{cone algebra; cylinder algebra; B\'ezout equation; 
maximal ideals;  stable ranks}

\date{Received: xxxxxx; Revised: yyyyyy; Accepted: zzzzzz.
\newline \indent $^{*}$ Corresponding author}

\begin{abstract}
In this exposition-type note we present detailed proofs of certain assertions concerning several algebraic properties of the cone and cylinder algebras. These include   a determination
of the maximal ideals, the solution of the B\'ezout equation and a computation of the
stable ranks by elementary methods. 

  \end{abstract}

 \maketitle

 Let $\D=\{z\in \C: |z|<1\}$ be the  open unit disk in the complex plane $\C$ and ${\bf D}$  its closure. 
 As usual, $C(\bf D,\C)$ denotes  the space of continuous, complex-valued functions on $\bf D$ and
 $A(\bf D)$ the disk-algebra, that is the algebra of all functions in $C(\bf D,\C)$ which are holomorphic in $\D$.  By the Stone-Weierstrass Theorem, 
 $C({\bf D},\C)=[\,z,\ov z\,]_{{\rm alg}}$
and $A({\bf D})=[\,z]_{{\rm alg}}$, the uniformly closed subalgebras generated by $z$ and $\ov z$,
respectively $z$ on $\bf D$.
In this expositional note we study the  uniformly closed subalgebra
$$A_{co}=[\,z,|z|\,]_{{\rm alg}}\ss C(\bf D,\C)$$ 
 of  $C(\bf D,\C)$ which is generated by $z$ and $|z|$ as well as the algebra
  $$\mbox{${\rm Cyl}({\D})=\big\{f\in C\big({\bf D}\times [0,1],\C\big): \;f(\cdot, t)\in A(\bf D)\;$ for all
$t\in [0,1] \big\}$}.$$
We will dub the  algebra $A_{co}$ the {\it cone algebra} and the algebra ${\rm Cyl}({\D})$
the {\it cylinder algebra}.

 The reason for chosing these names will become clear  later in Theorem \ref{theconealg} and Proposition \ref{cyl}.   The cone-algebra appeared first in \cite{roy}; the cylinder algebra
 was around already at the beginning of the development of Gelfand's theory.
 Their role was mostly reduced to the category ``Examples" to  illustrate the general Gelfand theory;
 no detailed proofs appeared though. We think that these algebras deserve  a thorough analysis
 of their interesting algebraic properties and that is the aim of this note.  The main new results
 will be the determination of the stable ranks for the cone-algebra, the absence of  the corona property 
  $(Cn)$ when (and only when) $n\geq 2$,   explicit examples of peak functions and a  
  simple approach to the calculation of the Bass stable rank for the cylinder algebra.
 
 This paper  forms part of an ongoing textbook project of the authors
 on stable ranks of function algebras,
due to be finished only in a couple of years from now (now = 2015).  Therefore we decided
to  make this chapter already  available to the mathematical community (mainly for readers
of this special issue of Annals of Functional Analysis (AFA) dedicated to Professor Anthony To-Ming Lau and for master students interested  in function theory and function algebras).

 \section{The cone algebra}

We begin with some examples of non-trivial elements in $A_{co}$.  
The general situation will be dealt with
in Theorem \ref{repa5}.

\begin{example}\label{exacone}\hfill

\begin{enumerate}
\item For every $\delta>0$, $q>0$  and $p\in\R$, $\bigl(|z|^q+\delta\bigr)^p\in (A_{co})^{-1}$.
\item $f(z):=\begin{cases} \dis \frac{z}{\sqrt{|z|}} & \text{if $0<|z|\leq 1$}\\ 0 & \text{if $z=0$}
\end{cases}$
belongs to $A_{co}$. 
\end{enumerate}
\end{example}
\begin{proof}
(1) Since by Weierstrass' Theorem  $(x^q+\delta)^{\pm p}$ is a uniform limit of polynomials
 in $x=|z|\in [0,1]$, we have  that $(|z|^q+\delta)^p\in (A_{co})^{-1}$.
 
(2) First we note that $f$ is continuous on ${\bf D}$. For $\delta>0$,  let 
$$f_\delta(z):=\frac{z}{\sqrt{|z|}+\delta}.$$
Then, by (1), $f_\delta\in A_{co}$. Since $||f-f_\delta||_\infty\leq \delta/(1+\delta)$ (see below),
we conclude that $f$ is a uniform limit of functions in $A_{co}$; hence $f$ itself belongs to $A_{co}$.
Now we prove our estimate:
$$\left| \frac{z}{\sqrt{|z|}}- \frac{z}{\sqrt{|z|}+\delta}\right|= |z|\; \frac{\delta}{\sqrt{|z|}\,(\sqrt{|z|}+\delta)}
= \frac{\sqrt{|z|}}{\sqrt{|z|}+\delta}\;\delta=:h\left(\sqrt{|z|}\right).
$$
Since the derivative of $h(x)= \frac{\delta x}{x+\delta}$, namely  $h'(x)=\frac{\delta^2}{(x+\delta)^2}>0$ on $[0,1]$, we have
$h(x)\leq h(1)= \delta/(\delta+1)$.
\end{proof}

Next, we derive several Banach algebraic properties for $A_{co}$.

\begin{definition}\label{invclo}
Let $X$ be a topological space and $A$ a subalgebra of $C(X,\C)$.  
\begin{enumerate}
\item [(1)] The set of invertible $n$-tuples in $A$ is denoted by $U_n(A)$; that is
$$U_n(A)=\{(f_1,\dots,f_n)\in A^n: \exists (r_1,\dots,r_n)\in A^n: \sum_{j=1}^n r_jf_j=1\}.$$
 \item[(2)] $A$ is said to be {\it inverse-closed} (on $X$), if
$f\in A$ and $|f|\geq \delta>0$ on $X$  imply that $f$  is invertible in $A$.
\item [(3)] 
$A$ is said to  satisfy   condition (Cn) \footnote{ Here  (Cn) stands for ``Corona-condition
 for $n$-tuples''.} if
$$U_n(A) \dis =\Bigl\{(f_1,\dots,f_n)\in A^n: \Inter_{j=1}^n Z_X(f_j)=\emp\Bigr\},$$
where $Z_X(f)=\{x\in X: f(x)=0\}$ denotes the zero set of $f$.
\item [(4)] If $A$ is a commutative unital Banach algebra over $\C$, then its spectrum 
(or set of multiplicative linear functionals on $A$ endowed with the weak-$*$-topology) is denoted by $M(A)$. Moreover,  $\widehat A$  is the set of Gelfand transforms $\widehat f$ of elements 
in $A$.
\end{enumerate}
\end{definition}

As usual,  for a compact set $X$ in $\C$, $P(X)$ is the uniform closure in $C(X,\C)$
of the set $\C[z]$ of polynomials.

\begin{theorem}\label{theconealg}
Let $A_{co}=[\,z,|z|\,]_{{\rm alg}}\ss C(\bf D,\C)$ be the cone algebra.
Then 
\begin{enumerate}
\item  [(1)] $A({\bf D})\ss A_{co}\ss C(\bf D,\C)$.
\item [(2)] $A_{co}|_{\T}=A({\bf D})|_{\T}=P(\T)$.
  \item [(3)] For every $0<r< 1$, $A_{co}|_{r\T}=P(r\T)$. 
\item [(4)] $M(A_{co})$ is homeomorphic to the cone 
$$K:=\{(x,y,t)\in \R^3: \sqrt{x^2+y^2}\leq t,\; 0\leq t\leq 1\},$$
a three dimensional set.\\

 \begin{figure}[h]
   \hspace{3cm}
   \scalebox{.40} {\includegraphics{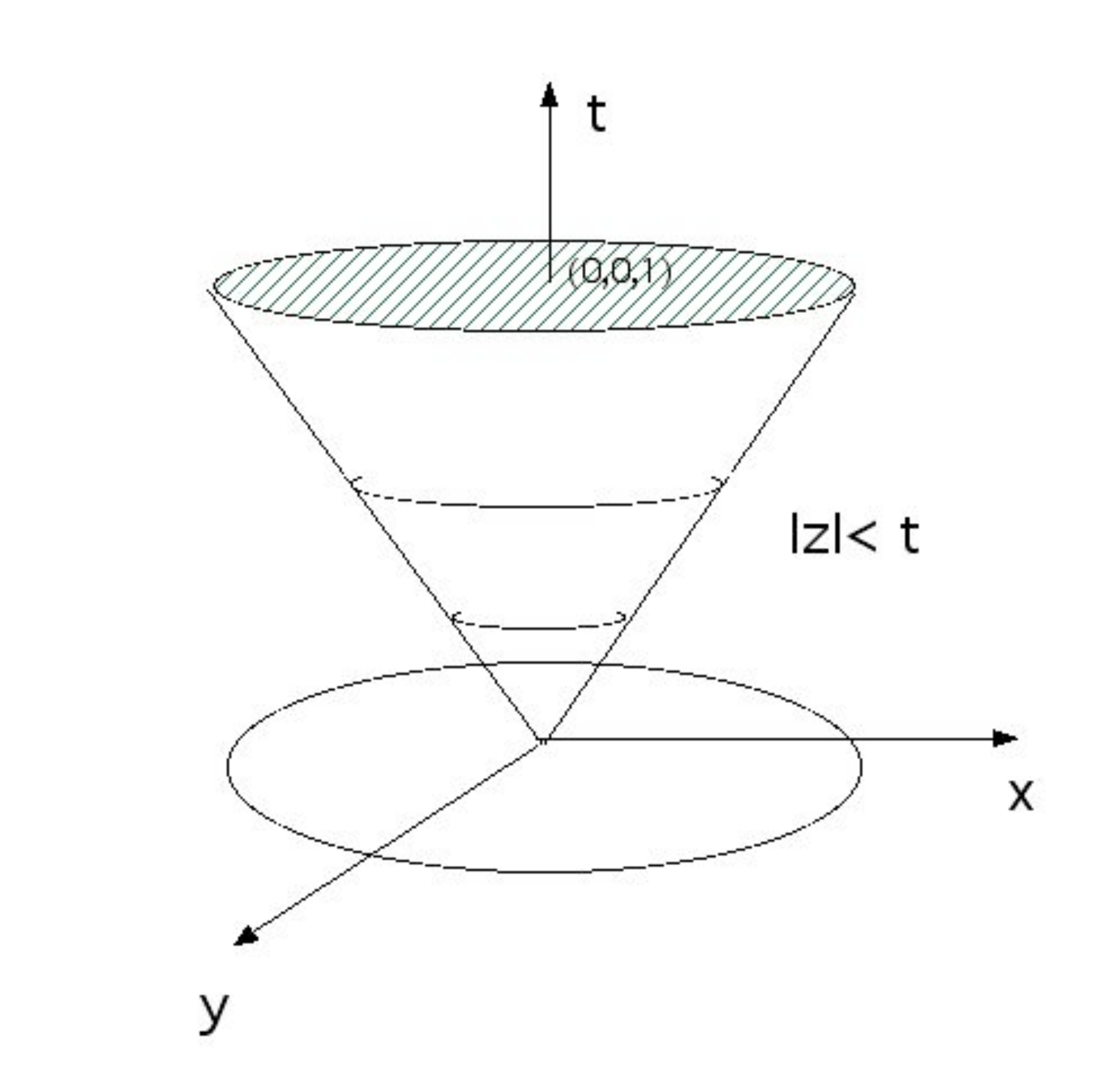}} 
\caption{\label{spectralcone} The cone as spectrum}
\end{figure}

\item[(5)]  For $f\in C({\bf D},\C)$ and $0<r<1$,  let $f_r$ be the dilation of $f$ given by
$f_r(z)=f(rz)$.  Moreover, let 
\begin{equation}\label{poiR}
P_R[f](z):=\frac{1}{2\pi} \int_0^{2\pi} \frac{R^2-|z|^2}{|Re^{it}-z|^2}\, f(Re^{it})\;dt
\end{equation}
be the Poisson integral and  $P[f]:=P_1[f]$. Then
$$M(A_{co})=\delta_0 \union \bigl\{\psi_{r,a}: 0< r\leq 1,\; a\in {\bf D}\; {\rm with}\; |a|\leq r\bigr\},$$
 where $\delta_0$ is point evaluation at $0$,  $\psi_{r,a}=\delta_a$ if $|a|=r$ and
$$\psi_{r,a}: \begin{cases} A_{co} &\to \C\\ f &\mapsto P[(f_r)|_\T](a/r)=
P_r[f|_{r\T}](a),\end{cases}$$
if $|a|<r$.  

\item [(6)] $\widehat A_{co}$ is the uniform closure of the polynomials $p(z,r)$ on the cone 
$$K=\{(z,t)\in \mathbf D\times [0,1]: |z|\leq t\}$$ and coincides with 
$$\mathcal A:=\bigl\{f\in C(K,\C): f(\cdot,r)\in A(r{\bf D})\;\; \forall\;  r\in \;]0,1]\;\bigr\}.$$

\item [(7)] $A_{co}$ is  inverse-closed; that is, it has property (C1), but    $A_{co}$ does not have property 
(Cn) for any $n\geq 2$.

\item[(8)] The Shilov boundary of $A_{co}$ coincides with the outer surface 
$$S:=\{(z,r)\in \C\times \R:  0\leq r\leq 1, |z|=r\}$$
of the cone $K$ (this is the boundary of $K$ without
the upper disk $\{(z,1)\in \C\times \R: |z|<1\}$).
The Bear-Shilov boundary  is the closed unit disk \footnote{ Recall that if
$X$ is a compact Hausdorff space and $L$ a point-separating $\K$-linear subspace of $C(X,\K)$
with $\K\ss L$,  then $L$ admits a smallest closed boundary, which we will call the Bear-Shilov boundary (see \cite{bea}). 
The Shilov boundary of a uniform algebra  $A$ is the smallest closed boundary
of $A$ on its spectrum $M(A)$.}.
\end{enumerate}
\end{theorem}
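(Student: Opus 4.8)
The plan is to route everything through the Gelfand transform and the model algebra $\mathcal A$ on the solid cone $K$.

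\emph{The backbone (parts (1)--(3), (6)).} Part~(1) is immediate from $A(\mathbf D)=[z]_{\rm alg}\ss[z,|z|]_{\rm alg}=A_{co}\ss C(\mathbf D,\C)$. The substance is a structural description of $A_{co}$, to be established as Theorem~\ref{repa5}. The one-line observation driving it is that in polar coordinates $z=re^{i\theta}$ the monomial $z^m|z|^n$ restricts to $r\T$ as $r^{m+n}e^{im\theta}$, which carries no negative Fourier modes; this property survives uniform limits, so every $f\in A_{co}$ has $\theta\mapsto f(re^{i\theta})$ extending holomorphically to $r\D$ for each $r\in\,]0,1]$. Since also $|z|\equiv r$ on $r\T$, this yields $A_{co}|_{r\T}\ss\C[z]|_{r\T}\ss P(r\T)$; for the reverse inclusions (which give (2) and (3)) I would take $g\in P(r\T)$ with holomorphic extension $G\in A(r\mathbf D)$ and form $G\circ\psi\in A_{co}$, where $\psi(z)=z\cdot\min(1,r/|z|)$ is a product of $z$ with a continuous function of $|z|$ (every continuous function of $|z|$ lies in $A_{co}$ by Weierstrass), noting $\psi(\mathbf D)=\overline{r\mathbf D}$ and $\psi|_{r\T}=\mathrm{id}$. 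For the nontrivial half of (6), $\mathcal A\ss\widehat{A_{co}}$, I would prove that for $F\in\mathcal A$ the outer-surface restriction $\widetilde F(z):=F(z,|z|)$ lies in $A_{co}$: use Fej\'er summation in $\theta$, write the $m$-th term of the Fej\'er means as $\phi_m(|z|)\,z^m/|z|^m$ with $\phi_m$ continuous on $[0,1]$ and $\phi_m(0)=0$ for $m\ge1$, and approximate each such term uniformly by $z^m\phi_m(|z|)(|z|+\delta)^{-m}\in A_{co}$. This density statement (a Mergelyan-type theorem on $K$, of which Example~\ref{exacone}(2) is the model case) is the main obstacle of the whole theorem.

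\emph{The spectrum (parts (4), (5)).} Consider $\Phi:M(A_{co})\to\C\times\R$, $\Phi(\varphi)=(\varphi(z),\varphi(|z|))$. It is weak-$*$ continuous and, because $z,|z|$ generate $A_{co}$, injective; its image lies in $K$ since $\varphi(|z|)\in[0,1]$ ($\lambda-|z|$ is invertible in $A_{co}$ for $\lambda\notin[0,1]$ by Runge) and $|\varphi(z)|\le\varphi(|z|)$ (apply $\varphi$ to $z/(|z|+\e)$, whose sup-norm is $1/(1+\e)$, and let $\e\to0$). As $M(A_{co})$ is compact and $K$ Hausdorff, $\Phi$ is a homeomorphism onto a compact subset of $K$; surjectivity is precisely the assertion that the functionals of part~(5) exist. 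Namely $\delta_0\mapsto(0,0)$, $\delta_a\mapsto(a,r)$ if $|a|=r$, and for $|a|<r\le1$ the formula $\psi_{r,a}(f)=P[(f_r)|_\T](a/r)$ gives the value at $a/r$ of the holomorphic extension of $\theta\mapsto f(re^{i\theta})$; this is multiplicative because the holomorphic extension of a product of two boundary functions without negative modes is the product of their extensions, and $\Phi(\psi_{r,a})=(a,r)$. Hence $M(A_{co})\cong K$, which is (4) and (5); (6) follows because $\widehat{A_{co}}$ is then the uniform closure on $K$ of $\{\,\widehat{z^m|z|^n}\,\}=\{\,z^mt^n\,\}$, equal to $\mathcal A$ by the backbone.

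\emph{Corona behaviour (part (7)).} By Gelfand, $f\in(A_{co})^{-1}$ iff $\widehat f$ has no zero on $K=M(A_{co})$. If $|f|\ge\delta>0$ on $\mathbf D$, write $f=\lim_k p_k$ with $p_k\in[z,|z|]_{\rm alg}$; for large $k$ the holomorphic extension $h_{k,r}$ of $\theta\mapsto p_k(re^{i\theta})$ is a polynomial in $w$ with coefficients continuous in $r$, is zero-free on $\T$ (where $|h_{k,r}|\ge\delta/2$), and is the non-zero constant $p_k(0)$ at $r=0$, so by the argument principle the number of its zeros in $\D$ is independent of $r$ and equals $0$. Letting $k\to\infty$, Hurwitz's theorem forces the extension of $\theta\mapsto f(re^{i\theta})$ to be zero-free in $\D$, whence $|\widehat f|\ge\delta$ on $K$ and $f$ is invertible; this is (C1). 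For the failure of (Cn), $n\ge2$: the pair $(z,\,|z|-\tfrac12)$ has empty common zero set on $\mathbf D$ ($z$ vanishes only at $0$, where $|z|-\tfrac12=-\tfrac12$), yet $\psi_{1/2,0}$ annihilates both (it sends $z\mapsto0$, $|z|\mapsto\tfrac12$), so the pair is not invertible; padding with zeros handles all $n\ge2$.

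\emph{Boundaries (part (8)).} On each slice $\{(z,r):|z|\le r\}$ the function $\widehat f(\cdot,r)\in A(r\mathbf D)$ attains its maximum modulus on $r\T\ss S$, so $S$ is a closed boundary and $\partial(A_{co})\ss S$. For the opposite inclusion I would show that every point of $S$ is a peak point: $1-|z|$ peaks at the apex $(0,0)$, and a point $(z_0,r_0)$ with $|z_0|=r_0>0$ is peaked by
\[F=\bigl(1-\bigl|\,|z|-r_0\,\bigr|\bigr)^{N}\cdot\tfrac12\bigl(1+z\overline{z_0}/r_0^2\bigr)\in A_{co},\qquad N>\tfrac{1}{2r_0},\]
the first factor forcing $|\widehat F|<1$ off the slice $\{t=r_0\}$ (the kink at $|z|=r_0$ supplies linear rather than merely quadratic decay in $t$) while the disk-algebra factor peaks inside that slice exactly at $z_0$; hence $\partial(A_{co})=S$. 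Finally I would identify the Bear--Shilov boundary with the closed ``top'' disk $\{(z,1):|z|\le1\}\cong\overline{\mathbf D}$ by invoking the characterisation of the minimal closed Bear boundary from \cite{bea}. The two places where the real work concentrates are the density step of the first paragraph and this peak-function construction.
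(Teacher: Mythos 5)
Your treatment of parts (1)--(7) and of the Shilov boundary in (8) is correct, and at several points it takes a genuinely different (and arguably more streamlined) route than the paper: you obtain the density statement behind (6) (the paper's Theorem \ref{repa5}) by Fej\'er summation in $\theta$ followed by the regularization $z^m\phi_m(|z|)(|z|+\delta)^{-m}$, whereas the paper uses Abel/Poisson means at a radius $\rho<1$ together with a cut-off of the coefficients near $r=0$ (or, alternatively, Bishop's antisymmetric decomposition); you get $|\varphi(z)|\le\varphi(|z|)$ for $\varphi\in M(A_{co})$ by applying $\varphi$ to $z/(|z|+\e)$, where the paper computes the polynomially convex hull of the joint spectrum in $\C^2$; and for (C1) you run the argument principle along the homotopy $r\mapsto h_{k,r}$ down to the nonzero constant $p_k(0,0)$ at $r=0$, where the paper invokes Borsuk's theorem to produce a continuous logarithm. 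Your peak function $F=\bigl(1-\bigl|\,|z|-r_0\bigr|\bigr)^{N}\cdot\tfrac12\bigl(1+z\overline{z_0}/r_0^2\bigr)$ with $N>1/(2r_0)$ does peak at $(z_0,r_0)$ on $K$ (for $t>r_0$ one needs $(1-s)^N\bigl(1+\tfrac{s}{2r_0}\bigr)<1$ with $s=t-r_0$, which follows from $-Ns+\tfrac{s}{2r_0}<0$), and it differs from both examples in the paper. All of this is sound, modulo the usual uniformity remarks (equicontinuity of $\{f_r|_\T\}$ to make the Fej\'er convergence uniform in $r$).

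The one genuine error is your closing claim in (8) that the Bear--Shilov boundary is the closed top disk $\{(z,1):|z|\le 1\}$ of the cone. The Bear--Shilov boundary here is the minimal closed boundary of $A_{co}$ as a subspace of $C(\mathbf D,\C)$ on the original domain $\mathbf D$, not a subset of $K=M(A_{co})$; the assertion to be proved is that it equals all of $\mathbf D$, i.e.\ that every point of $\mathbf D$, interior points included, is a peak point of $A_{co}$ on $\mathbf D$. Moreover the set you name is not even a boundary: $1-|z|\in A_{co}$ has Gelfand transform $1-t$, whose supremum $1$ on $K$ is attained only at the apex, while it vanishes identically on the top disk; equivalently, on $\mathbf D$ the function $1-|z|$ peaks at $0$, which under $z\mapsto(z,|z|)$ corresponds to the apex, not to any point of the top disk (the points $(a,1)$ with $|a|<1$ correspond to the functionals $\psi_{1,a}$, which are not evaluations at points of $\mathbf D$ at all). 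The fix is already contained in your own construction: the homeomorphism $z\mapsto(z,|z|)$ of $\mathbf D$ onto $S$ intertwines $f$ with $\widehat f|_S$, so your proof that every point of $S$ is a peak point for $\widehat A_{co}$ on $K$ shows a fortiori that every point of $\mathbf D$ is a peak point for $A_{co}$ on $\mathbf D$, whence the minimal closed boundary in $\mathbf D$ is $\mathbf D$ itself. This is exactly the paper's argument run in the opposite order (peak points on $\mathbf D$ first, then transfer to $K$).
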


 \begin{proof}
(1)  This is clear  since the polynomials in $z$ are dense in $A(\bf D)$.

\centerline{\rule{6cm}{0,2mm}}\medskip

(2) Let $f\in A_{co}$. If we choose a sequence of polynomials $p_n\in \C[z,w]$ such that
$p_n(z,|z|)$ converges uniformly to $f$ on $\bf D$,  then $p_n(z,1)$  converges
uniformly on $\T$ to $f|_\T$. Hence, $f|_\T\in P(\T)=A(\bf D)|_{\T}$.
Together with (1), we conclude that $A({\bf D})|_{\T}= A_{co}|_{\T}$. 

\centerline{\rule{6cm}{0,2mm}}\medskip

(3) Fix $0<r<1$ and let $f\in A_{co}$.  Then, for $|z|=r$,  $f(z)=\lim p_n(z,r)\in P(r\T)$.
Hence $A_{co}|_{r\T}\ss P(r\T)$.
Conversely, given $h\in P(r\T)$, we let $H:=P_r[h]$ be the Poisson extension of $h$ to $r \D$.
Then $H\in A(r\D)$.  Now we define the function $f$ by
$$f(z)= \begin{cases}  H(z) &\text{ if $|z|\leq r$}\\ \dis H\Bigl(r\frac{z}{|z|}\Bigr)& \text{ if $r\leq |z|\leq 1$}.
\end{cases}
$$
Note that $f$ is an extension of $H$ to the unit disk that stays constant on every ray 
$se^{i\theta}$ beginning at the radius $r$.
Then $f$ is continuous on ${\bf D}$. Now $f(z)$ can be written as $f(z)=H\bigl(z g(|z|)\bigr)$,
 where $g$ is defined by 
$$g(s)=\begin{cases} 1 &\text{ if $0\leq s\leq r$}\\ \dis \frac {r}{s} & \text{ if $r\leq s\leq 1$}.
\end{cases}
$$
Then $g$ is continuous on $[0,1]$.  Next, we uniformly approximate on $[0,1]$ the function $g$ by a sequence  $(q_n)$ of polynomials in $\C[s]$  and $H$ on $\{|z|\leq r\}$ by a sequence of polynomials $(p_n)\in \C[z]$. Let
$$Q_n(s):=\frac{rq_n(s)}{\dis\max_{0\leq s\leq 1} |sq_n(s)|}$$ 
Then also $(Q_n)$ converges  uniformly to $g$ on $[0,1]$, because $\max_{0\leq s\leq 1}|sg(s)|=r$.
What we have gained is that $|z\, Q_n(|z|)|\leq r$ for every $z\in {\bf D}$.  
Hence $H(z\,Q_n(|z|))$ is well defined on ${\bf D}$ and $H(z\,Q_n(|z|))$ converges uniformly 
on ${\bf D}$ to $f(z)$.
 We claim that $p_n(z Q_n(|z|))$ converges uniformly on ${\bf D}$ to $f(z)$, too. In fact,
\begin{eqnarray*}|p_n(zQ_n(|z|)) -f(z)| &\leq& |p_n( zQ_n(|z|))-H( zQ_n(|z|))|+|H(zQ_n(|z|))-f(z)|\\
&\leq & \max_{|w|\leq r}|p_n(w)-H(w)| + \max_{|\xi|\leq 1}|H(\xi Q_n(|\xi|))-f(\xi)|.
\end{eqnarray*}
Now $Q_n(|z|)\in A_{co}$ implies $zQ_n(|z|)\in A_{co}$ and so $p_n(zQ_n(|z|)\in A_{co}$.
We conclude that $f\in A_{co}$. Since $f|_{r\T}=H|_{r\T}=h$, we are done: $P(r\T)\ss A_{co}|_{r\T}$.

\centerline{\rule{6cm}{0,2mm}}\medskip

(4)  Here we show that the spectrum of $M(A_{co})$
  is homeomorphic to the cone  (see figure \ref{spectralcone})
  $$K:=\{(x,y,t)\in \R^3: \sqrt{x^2+y^2}\leq t, 0\leq t\leq 1\},$$
   To this end,  we first note  that with $B:=C({\bf D},\C)$, 
$$\sigma_B(z,|z|)=\{(a_1,a_2)\in\C^2: |a_1|\leq 1, a_2=|a_1|\},$$
because $$(z-a, |z|-b)\in U_2(C({\bf D},\C))$$
 if and only if the functions $z-a$ and $|z|-b$ have no common zeros in $\bf D$.
Geometrically speaking,  $S:=\sigma_B(z,|z|)$ 
is the  surface of the cone in figure \ref{spectralcone}, without the upper basis
 ${\{(w,1)\in \C\times \R: |w|<1\}}$; we call this the {\it outer surface} of $K$. 
By a general Theorem in Banach algebras (see \cite{rud}), 
 $\sigma_{A_{co}}(z,|z|)$  now is the polynomial convex hull $\widehat S$ of
$S=\sigma_B(z,|z|)$, which we are going to determine below. 
Observe that  $K$ can be identified  with  the following compact subset of $\C^2$: 
$$\tilde C:=\{(z_1,z_2)\in \C^2: |z_1|\leq {\rm Re}\, z_2, \; 0\leq {\rm Re }\, z_2\leq 1,\;
{\rm  Im}\;z_2=0\},$$ 
and that $S\ss \tilde C\ss\R^3\times \{0\}$, because 
$$S=\{(z_1,z_2)\in \C^2: |z_1|=z_2={\rm Re}\, z_2,\; 0 \leq {\rm Re}\, z_2 \leq 1,\;  {\rm Im}\, z_2 =0\}.$$

Fix $0<t\leq 1$. We first show that every disk $D_t:=\{(w,t)\in \C^2: |w|\leq t\}$ 
is contained in $\widehat S$. To this end, fix $(w,t)\in D_t$ and 
consider any polynomial $p\in \C[z_1,z_2]$. Then
\begin{eqnarray*}
|p(w,t)| &\leq &\max \{|p(z_1,t)|:|z_1|\leq t\}=\max \{|p(z_1,t)|: |z_1|=t\}\\
&\leq &\max\{|p(z_1,z_2)|: (z_1,z_2)\in S\}.
\end{eqnarray*}
Hence $(w,t)\in \widehat S$ and so $D_t\ss \widehat S$. Consequently,
$$S\ss \tilde C=\Union_{|t|\leq 1} D_t \ss\widehat S.$$

 Since $K$ is a convex set in $\R^3$, $\tilde C$ is a
convex compact set in $\C^2$, and so $\tilde C$ is polynomially convex.
Thus $\widehat S=\tilde C$. We conclude that
$(z-a,|z|-b)$ does not belong to $U_2(A_{co})$ if and only if  $b\in [0,1]$ and $|a|\leq b$.\medskip

\centerline{\rule{6cm}{0,2mm}}\medskip

(5)   Let $m\in M(A_{co})$ and put $a:=m(z)$ and $r:=m(|z|)$. Then $(a,r)\in \sigma_{A_{co}}(z,|z|)$.
Hence, by (4), $r\in [0,1]$,  $a\in{\bf D}$ and $|a|\leq r$. That is, $|m(z)|\leq m(|z|)$.
Let $f\in A_{co}$ and $p_n\in\C[z,w]$ a sequence
of polynomials such that $p_n(z,|z|)$ converges uniformly on $\bf D$ to $f(z)$. 
By (3), $f|_{r\T}\in P(r\T)$.  Now 
\begin{equation}\label{randkegel}
\mbox{$\lim p_n(z,r)=f(z)$ (uniformly on $|z|=r$)}.
\end{equation}
By the maximum principle, $p_n(\xi,r)$ converges  uniformly on $r\D=\{|\xi|\leq r\}$ to a function
$\check f$ with $\check f=f|_{r\T}$. Moreover, 
$$
\mbox{$\check f(w)=P[(f_r)|_{\T}](w/r)= P_r[f|_{r\T}](w)$ for $|w|< r$}.
$$
On the other hand, because $m(z)=a$ and $m(|z|)=r$,
$$m(f)=\lim m(p_n)= \lim p_n(a,r).$$
Hence, if $|a|< r$, we conclude that $m(f)=\check f(a)$. In other words, $m=\psi_{r,a}$.
If $|a|=r$, then this limit $m(f)$ coincides with $f(a)$ by \eqref{randkegel}; that is $m=\delta_a$.

\begin{figure}[h]
   \hspace{0cm}
   \scalebox{.40} {\includegraphics{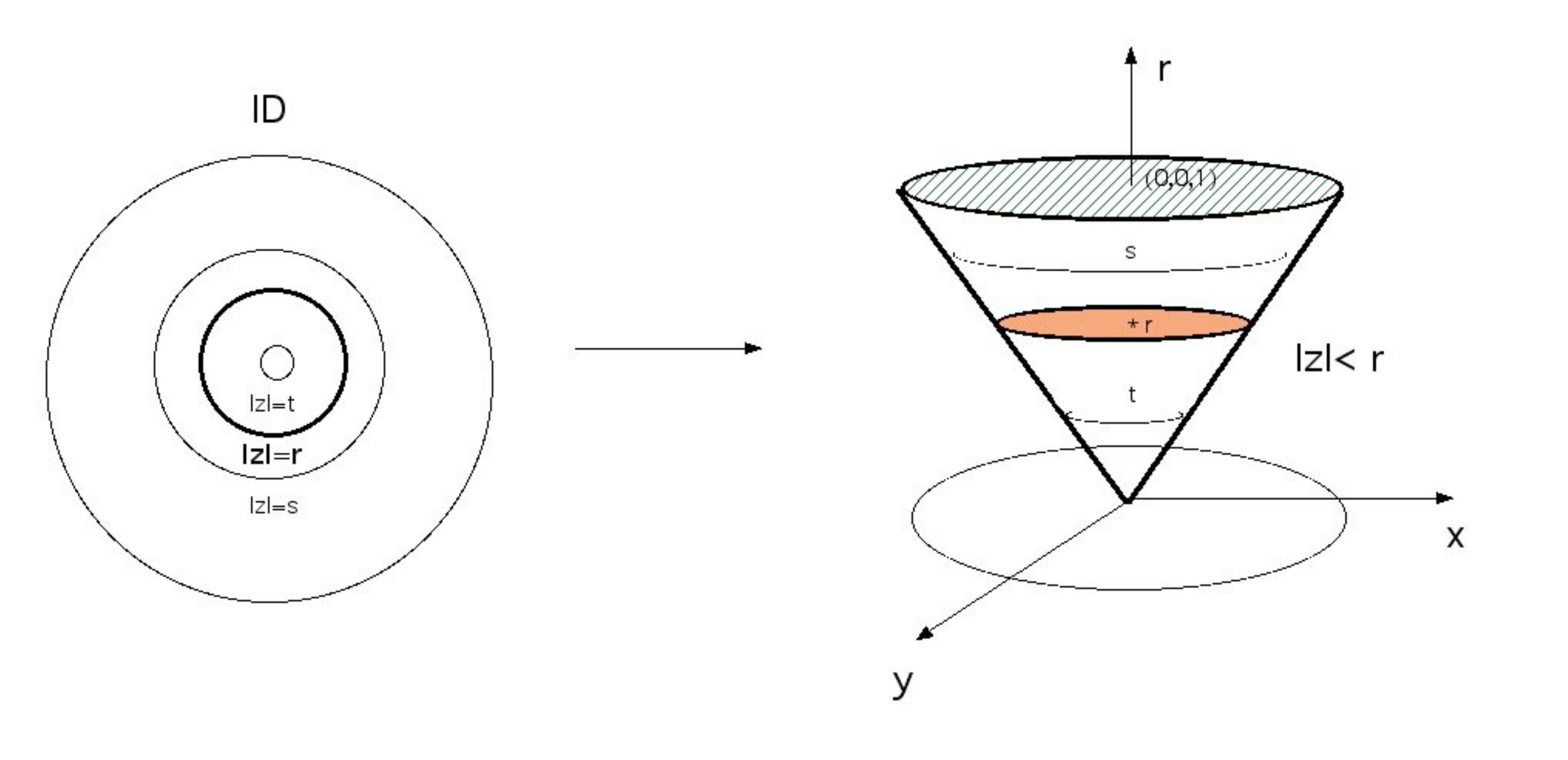}} 
\caption{\label{spectr} Functionals on the disk correspond to functionals 
on the  surface of the cone }
\end{figure}

It remains to show  the converse, that is  $\psi_{r,a}=\delta_a\in M(A_{co})$ when $|a|=r$ (which is clear) and 
$\psi_{r,a}\in M(A_{co})$ for every $(a,r)\in K$ 
with $|a|<r$ and $0<r\leq 1$. To this end, 
let $T_r: A_{co}\to A(\D)$ be the map given by $T_r(f)=P[(f_r)|_{\T}]$. Since $(f_r)|_\T\in P(\T)$,
and since the Poisson operator is multiplicative on $P(\T)$,
we deduce that $T_r$ is an algebra homomorphism. Hence, for every $\xi\in\D$, the
map  $m$ given by  
$m:=\delta_\xi\circ T_r$
is a homomorphism of $A_{co}$ into $\C$. Since $m({\bf 1})=1$, we conclude that $m\in M(A_{co})$.
If $\xi\in \D$ is chosen so that $a=r\xi$, then $m= \psi_{r,a}$.\medskip

\centerline{\rule{6cm}{0,2mm}}\medskip

(6)\;  
>From (5) we conclude that the Gelfand transform
$\widehat f$ of $f$ has the following properties: ${\widehat f(0+i\,0,0)=f(0)}$ and,  if $0<r\leq 1$,
then  $\widehat f (w,r)=f(w)$ whenever  $|w|=r$, $w\in\C$,  and  $\widehat f(w,r)=P_r[f|_{r\T}](w)$
whenever $|w|<r$. Since $f|_{r\T}\in P(r\T)$ is the boundary function of a holomorphic function
on $|w|<r$, we conclude from (3) that $\widehat f(\cdot,r)\in A(r\,{\bf D})$.   Hence
$$\widehat A_{co}\ss \mathcal A=\bigl\{f\in C(K,\C): f(\cdot,r)\in A(r{\bf D})\;\; \forall\;  r\in \;]0,1]\bigr\}.$$
Next we observe that
  $\widehat z=z$ and 
$\widehat{|z|}=t$ on $K=\{(z,t)\in {\bf D}\times [0,1], \; |z|\leq t\}$. Hence, if $p\in \C[z,w]$,
then with $P(z):=p(z,|z|)$, $z\in {\bf D}$,  we see that
$$\widehat P(z,r)=p(z,r).$$
Since $A_{co}$ is a uniform algebra,  $(\widehat A_{co},||\cdot||_{M(A)})$  is isomorphic isometric
to $A_{co}$. Thus $\widehat A_{co}$ is the closure $\mathcal P(K)$ of the polynomials of the form $p(z,r)$ within $C(K,\C)$.
That  $\mathcal P(K)=\mathcal A$ ~~ follows immediately from Bishop's antisymmetric decomposition theorem    \cite[p. 60]{gam} and the fact
that the maximal antisymmetric sets for  $\mathcal P(K)$   as well as $\mathcal A$
\footnote{ Recall that a closed subset 
$E$ of $K$ is said to be a set of antisymmetry for a function algebra $A\ss C(K,\C)$ 
if every function in $A$ which is real-valued on $E$, is already constant on $E$.} 
 are the disks 
$$D_t=\{(w,t)\in \mathbf D: |w|\leq t\},$$
with $0<t\leq 1$ and the singleton $\{(0+i0,0)\}$.
An elementary proof can  be given   along the  same lines  as in Theorem \ref{repa5}.\medskip

\centerline{\rule{6cm}{0,2mm}}\medskip

(7) Suppose that  $f\in A_{co}$ has no zeros on $\bf D$. By a theorem of Borsuk 
 (see \cite[p.~99]{bu})
 $f$ has a continuous logarithm 
on ${\bf D}$, say $f=e^g$ for some $g\in C({\bf D},\C)$. Let
$$N:=\ind (f_r)|_{\T}= n(\widehat f(\cdot,r),0)$$
be the index (or winding number) of $h:=(f_r)|_{\T}$ (see \cite[p.~84]{bu}).
  Note that $h$ has no zeros on $\T$. Hence, $N$ is well defined. 
This number, though,  coincides with the number of zeros of the holomorphic 
function $\widehat f(\cdot,r)$
in $r\D$.  But 
$$\ind (f|_{r\T})=\ind (e^g|_{r\T})=\ind (e^{g_{_r}}|_\T)=0.$$
Thus the Gelfand transform $\widehat f$ of $f$ does not vanish on $M(A)$.   Hence $f$ is invertible
in $A_{co}$. In other words, property (C1) is satisfied. 

Next we show that (C2) is not satisfied. In fact, consider the pair $(z, 1-|z|^2)$. Although this
pair is invertible in $C({\bf D},\C)$, it is not invertible in $A_{co}$. To see this, we assume the contrary.
Thus, there  exist $a,b\in A_{co}$ such that
$$a(z)\, z+ b(z) (1-|z|^2)=1.$$
In particular, $a(z)\,z=1$ for $|z|=1$.  In other words, $a(z)=\ov z$. But by (2), ${A_{co}}|_\T=P(\T)$.
Since $\ov z\not\in P(\T)$ (otherwise $P(\T)$ would coincide with $C(\T,\C)$), we have obtained
a contradiction. Thus we have found a pair $(f,g)$ of functions in $A_{co}$ without common zeros on
 $\bf D$ but for which $(f,g)\notin U_2(A_{co})$.  This implies that $A_{co}$ does not have property $(Cn)$ for any  $n\geq 2$.\\
Here is another way to see that  $(z,1-|z|^2)$ is not in $U_2(A_{co})$. Let $\psi_{1, 0}\in M(A)$ 
be  the functional in (5), where $r=1$ and $a=0$. Then  $\psi_{1,0}(f)=P[f|_\T](0)$. But if $f(z)=1-|z|^2$,
then $f\equiv 0$ on $\T$ and so $\psi_{1,0}(1-|z|^2)=0$. But $\psi_{1,0}(z)=0$, too, since
$P[e^{it}](w)= w$ for every $w\in \D$. Thus $z$ and $1-|z|^2$ both belong to the kernel of a 
multiplicative linear functional on $A_{co}$. \medskip

\centerline{\rule{6cm}{0,2mm}}\medskip

(8) We first determine then Bear-Shilov boundary of $A_{co}$.
To this end, it is sufficient to 
show that every $a\in \bf D$ is a peak-point for $A_{co}$. So 
let $a\in\bf D$. If $a=0$, then we take the peak functions $f(z)= \frac{1}{1+|z|}$ or $f(z)=1-|z|$.
If $a\not=0$, then we first choose a peak function $p(x)\in \C([0,1],\R^+)$ with $p(x)\leq Cx$,
$p(|a|)=1$ and $0\leq p(x)<1$ for $x\in [0,1]\setminus \{|a|\}$. Now let
$$f(z):=\begin{cases} \left(1+ \dis\frac{z}{|z|}\,e^{-i\arg a}\;\right)\, \dis\frac{p(|z|)}{2} & \text{if $z\not=0$}\\ 0 & \text{if $z=0.$}
\end{cases}$$
Then $f\in C(\bf D,\C)$, $|f|\leq 1$, and if $|z|\not=|a|$,  then $|f(z)|\leq p(|z|)<1$. If $|z|=|a|$, then,
with $z=|a|e^{it}$, 
$$\mbox{$\left|1+ \dis\frac{z}{|z|}\,e^{-i\arg a}\;\right|= |1+e^{i(t-\arg a)}|<2$ if $t\not=\arg a \mod 2\pi$}.$$
Hence $|f(z)|<1$ for all $z\in {\bf D}\setminus \{a\}$.

It remains to show that $f\in A_{co}$. To this end, it suffices to prove that $zp(|z|)/|z|\in A_{co}$.
According to Weierstrass' theorem, let $P_n\in \C[x]$ be a sequence of  
 polynomials uniformly converging on $[0,1]$ to $p(x)$. Then $P_n(|z|)$ converges
 uniformly to $p(|z|)$ on $\bf D$. Since $P_n(|z|)\in A_{co}$, its limit $p(|z|)\in A_{co}$, too.
 Now $\delta+|z|\in (A_{co})^{-1}$   for every $\delta>0$  by Example \ref{exacone}.
 Hence $q_\delta(z):=zp(|z|)/(\delta +|z|)\in A_{co}$.
 But $\lim_{\delta\to 0}  q_\delta(z)=zp(|z|)/|z|$ uniformly on $\bf D$, because
 $$\left| \frac{z\, p(|z|)}{\delta+|z|}- \frac{z\,p(|z|)}{|z|}\right|= p(|z|)\frac{\delta}{\delta+|z|}\leq
\frac{C|z|}{\delta+|z|}\delta\leq C\, \delta\to 0.
$$
Thus we have shown that $f$ is a peak function at $a$ in $A_{co}$.  Here is a different example.
For $0<|a|\leq 1$, let
 $$f(z)=a+ z e^{-\frac{|z|}{|a|}}.$$
Then $f\in A_{co}$  and $f$ takes its maximum modulus in ${\bf D}$  only at $z=a$. In fact,
since the function $xe^{-x}$ takes its maximum on $[0,\infty[$ at $x=e^{-1}$,
\begin{eqnarray*}
|f(z)| &\leq& |a| + |z| e^{-\frac{|z|}{|a|}}=|a|\bigl( \mbox{$1+ \frac{|z|}{|a|}$}\,e^{-\frac{|z|}{|a|}}\bigr)
\leq |a|\, (1+e^{-1}),
\end{eqnarray*}
with equality at $z=a$.  Now  the last inequality is strict for $|z|\not=|a|$. 
If $z=|a|e^{it}$ and $a=|a|e^{i\arg a}$,  then
$$|f(z)|= |a|\, |e^{i\arg a}+e^{it}e^{-1}|<|a|( 1+e^{-1}) \ssi   \arg a \not= t \mod 2\pi.$$
Hence $f$ takes its maximum modulus only at $a$ and so $a$ is a peak point for $A_{co}$.
We conclude that the Bear-Shilov boundary is ${\bf D}$.  
Moving to $\widehat A_{co}$, we get that 
$$\widehat f(w,r)=\begin{cases} \left(1+ \dis\frac{w}{r}\,e^{-i\arg a}\;\right)\, \dis\frac{p(r)}{2} & \text{if $(w,r)\in K$, $r\not=0$}\\ 0 & \text{if $w=r=0.$}
\end{cases}$$
is a peak function at $(a,|a|)\in S\ss \partial K$. 
Since $\widehat g(\cdot,r)\in A(r{\bf D})$
for every $g\in A_{co}$, we just need to to apply the maximum  principle for 
holomorphic functions to conclude that no point
in the interior of the cone and on its upper surface $\{(w,1): |w|<1\}$ is a peak point for $\widehat A_{co}$.
Hence the Shilov boundary coincides with  the outer surface of the cone.
\end{proof}

Results on the peak sets for $A_{co}$ can be found in \cite{jim}.

 \begin{theorem}\label{repa5}
 We have the following identity:
 $$[\,z,|z|\,]_{{\rm alg}}=\{f\in C({\bf D},\C): f|_{r\T}\in P(r\T)\; \forall r\in \;]0,1]\}.$$
\end{theorem}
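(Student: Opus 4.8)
The plan is to prove the two inclusions separately. Write $\mathcal{B}:=\{f\in C({\bf D},\C): f|_{r\T}\in P(r\T)\ \forall r\in\;]0,1]\}$.

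The inclusion $[\,z,|z|\,]_{{\rm alg}}\ss\mathcal{B}$ is the easy direction and was essentially established already. Indeed, $\mathcal{B}$ is clearly a uniformly closed subalgebra of $C({\bf D},\C)$ (pointwise products and sums of boundary functions in $P(r\T)$ stay in $P(r\T)$, and a uniform limit on ${\bf D}$ restricts to a uniform limit on each $r\T$), so it suffices to check the generators: $z|_{r\T}\in P(r\T)$ trivially, and $|z|\big|_{r\T}\equiv r$ is constant, hence in $P(r\T)$. Therefore every element of $[\,z,|z|\,]_{{\rm alg}}$ lies in $\mathcal{B}$; this also re-proves parts (2) and (3) of Theorem \ref{theconealg} in one stroke. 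Alternatively one can quote (3) directly: for $f\in A_{co}$ and $0<r\le 1$, $f|_{r\T}\in A_{co}|_{r\T}=P(r\T)$ (using (2) for $r=1$).

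The substantive direction is $\mathcal{B}\ss A_{co}$. The key point will be to realize an arbitrary $f\in\mathcal{B}$ as a uniform limit of functions of the form $p(z,|z|)$. The strategy I would follow mirrors the proof of part (3): for $f\in\mathcal{B}$, define on each disk $r{\bf D}$ the Poisson extension $F(w,r):=P_r[f|_{r\T}](w)$ for $|w|\le r$, with $F(0,0):=f(0)$. Since $f|_{r\T}\in P(r\T)$, each $F(\cdot,r)\in A(r{\bf D})$, and the map $(w,r)\mapsto F(w,r)$ is continuous on the cone $K$; moreover, on the outer surface $|w|=r$ one has $F(w,r)=f(w)$. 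Thus $f$ is recovered from a function $\widehat f\in\mathcal{A}$ (the algebra of part (6)) via $f(z)=\widehat f(z,|z|)$. By part (6), $\mathcal{A}=\mathcal{P}(K)$ is the uniform closure on $K$ of polynomials $p(z,r)$; pick $p_n(z,r)\to\widehat f$ uniformly on $K$. Then $p_n(z,|z|)\in A_{co}$ and $p_n(z,|z|)\to\widehat f(z,|z|)=f(z)$ uniformly on ${\bf D}$, since $(z,|z|)\in K$ for all $z\in{\bf D}$. Hence $f\in A_{co}$.

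The main obstacle is the identification $\mathcal{A}=\mathcal{P}(K)$, which is exactly part (6) of Theorem \ref{theconealg} and rests on Bishop's antisymmetric decomposition. To keep the note self-contained one could instead argue directly, as the authors hint: given $\widehat f\in\mathcal{A}$, approximate $f(0)\approx$ a polynomial, and on each shell use a dilation-and-truncation trick exactly as in the proof of part (3) — approximate the radial "squeezing" function $g$ (with $g(s)=1$ for $s\le r$, $g(s)=r/s$ for $s\ge r$) by normalized polynomials $Q_n(s)$ so that $|zQ_n(|z|)|\le r$, compose with a polynomial approximant of $F(\cdot,r)$, and let $n\to\infty$; then a diagonal/partition-of-unity argument over finitely many radii $r_1<\dots<r_k$ patches these local approximations into a single $p(z,|z|)$ close to $f$ on all of ${\bf D}$. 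This patching over the radial variable, ensuring uniform control simultaneously on all shells, is the one genuinely delicate bookkeeping step; everything else is routine. Given that part (6) is already available in the excerpt, the cleanest writeup simply invokes it, and the proof reduces to the three displayed steps above.
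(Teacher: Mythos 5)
Your first inclusion is fine and matches what the paper already has from Theorem \ref{theconealg} (2)--(3). Your main argument for the converse --- extend $f$ to $\widehat f(w,r):=P_r[f|_{r\T}](w)$ on the cone $K$, invoke part (6) to write $\widehat f$ as a uniform limit of polynomials $p_n(z,r)$ on $K$, and restrict to the outer surface via $(z,|z|)$ --- is correct, but it is not really a different proof: the identity $\mathcal P(K)=\mathcal A$ in part (6) is itself obtained from Bishop's antisymmetric decomposition theorem, so your route coincides in substance with the paper's proof (i), which applies Bishop directly to $A_{co}$ on $\mathbf D$ (the maximal antisymmetric sets are the circles $r\T$, and $A_{co}|_{r\T}=P(r\T)$ by (3)). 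Applying Bishop on $\mathbf D$ is shorter than your detour through $K$, which moreover obliges you to check that $(w,r)\mapsto P_r[f|_{r\T}](w)$ is continuous on $K$, including at the apex; you assert this without proof. It is true (uniform continuity of $f$ gives $\bigl\Vert (f_r)|_\T-(f_{r'})|_\T\bigr\Vert_\infty\to 0$, the Poisson operator is a sup-norm contraction, and at the apex $|P_r[f|_{r\T}](w)-f(0)|\leq \sup_{|z|=r}|f(z)-f(0)|$), but it must be said.

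The real content of the paper's proof is its elementary argument (ii), which avoids Bishop entirely: expand $f(re^{it})\sim\sum_n a_n(r)r^ne^{int}$, show $r\mapsto r^na_n(r)$ is continuous on $[0,1]$ and vanishes at $r=0$ for $n\geq 1$, obtain a dilation parameter $\rho<1$ and a truncation level $N$ \emph{uniform in $r$} from a Poisson-kernel estimate, cut off the coefficients near $r=0$, and finally approximate the coefficient functions by polynomials in $r$ via Weierstrass. Your sketched ``elementary alternative'' (local approximants on shells, patched by a partition of unity in $|z|$) is a genuinely different idea and looks workable --- the part-(3) construction at radius $r_j$ is within $\e$ of $f$ on a small annulus around $r_j\T$ by uniform continuity of $f$ and of the Poisson extension up to the boundary, and the cutoffs $\chi_j(|z|)$ lie in $A_{co}$ --- but as written it is only a sketch: you yourself call the patching ``the one genuinely delicate bookkeeping step'' and then do not carry it out, nor do you treat the degenerate radius $r=0$. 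So either quote part (6)/Bishop and supply the continuity of $\widehat f$ on $K$, or commit to one of the two elementary arguments in full; as it stands the non-Bishop half of your proposal is incomplete.
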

 \begin{proof}
 i) This follows from Bishop's antisymmetric decomposition theorem  \cite[p. 60]{gam} and the fact
that the maximal antisymmetric sets for $A_{co}$ are  the circles $\{|z|=r\}$, $0\leq r\leq 1$.
We would like  to present  the following   elementary proof, too.\\

 ii)  Let $A^*=\{f\in C({\bf D},\C): f|_{r\T}\in P(r\T)\}$.  We already know that 
 $[\,z,|z|\,]_{{\rm alg}}\ss A^*$. Observe that  every $h\in P(r\T)$
is the trace of a function $H$ that is continuous on $\{|z|\leq r\}$ and holomorphic in $\{|z|<r\}$. Hence
$H$ writes as $H(z)=\sum_{n=0}^\infty h_n z^n$, where $h_n$ are the Taylor coefficients of $H$.
They are given by the formula
\begin{eqnarray*}h_n &=&\frac{1}{n!} H^{(n)}(0) =\frac{1}{2\pi i}\int_{|\zeta|=r}
 \frac{H(\zeta)}{\zeta^{n+1}}\;d\zeta\\
 &=& \frac{1}{2\pi}\int_0^{2\pi}  \frac{h(re^{it})}{r^n} e^{-int}\; dt.
\end{eqnarray*}
The Fourier series  associated with $h$ then has the form 
$$h(re^{it})\sim \sum_{n=0}^\infty h_n r^ne^{int}.$$
Now fix $f\in A^*$. Since   $f|_{r\T}\in P(r\T)$, the preceding lines  imply that
for every $0<r\leq 1$ the Fourier series of the family of functions $f_r$
 \footnote{ Note that the Fourier series 
$\sum_{n=0}^\infty b_n(r) e^{int}$ for $(f_r)|_\T$ would not be useful to our problem here,
since at a later stage of the proof we really need the factor $r^n$.} are given by
$$f_r(e^{it})=f(re^{it})\sim \sum_{n=0}^\infty a_n(r)r^ne^{int}.$$
Here  
\begin{equation}\label{coeff-esti}
r^n|a_n(r)|\leq \bigl\Vert{(f_r)|}_\T\bigr\Vert_1\leq {||f||}_{\infty},
\end{equation}
where $||\cdot||_1$ is the $L_1$-norm on $\T$ and $||\cdot||_\infty$ the supremum norm on $\bf D$,
and
\begin{equation}\label{gegennull}
\mbox{$\lim_{r\to 0} r^na_n(r)=0$ for every $n=1,2, \dots$.}
\end{equation}
because
$$2\pi r^na_n(r)= \int_0^{2\pi} f(re^{it}) e^{-int}\, dt\buildrel\to_{r\to 0}^{} f(0)\int_0^{2\pi}e^{-int}\, dt =0.$$
Note also that  the map  $r\mapsto r^na_n(r)$, $n\in \N$,  is a continuous function on $[0,1]$; in fact,
since $f$ is uniformly continuous on $\bf D$,
\begin{eqnarray}\label{contiar}
2\pi |r^na_n(r)-r'^na_n(r')| &=& \left|\int_0^{2\pi} e^{-int} ( f(re^{it})-f(r'e^{it}))\, dt\right| \nonumber\\
&\leq & \int_0^{2\pi}| f(re^{it})-f(r'e^{it})|\, dt\leq 2\pi \e
\end{eqnarray}
if $|r-r'|<\delta.$
Consider now, for the parameter $r\in \;]0,1]$ and $0<\rho\leq 1$,  the polynomial  (in $z\in\C$)
$$p_N(z,r)=\sum_{n=0}^N a_n(r)  \rho^n z^n.$$
We show, that  
\begin{equation}\label{ppp}
\max_{|z|=r}|f(z)-p_N(z,r)|<\e
\end{equation} 
for suitably chosen  $\rho$, $\rho$ close to $1$, and
some $N\in \N$, $N$ and $\rho$ independent of $r$.

Let us start the proof of \eqref{ppp}. Since $f$ is uniformly continuous on $\bf D$, we may choose
 $\eta\in \;]0,1]$, independent of $r$,  so that $|f(re^{it})-f(re^{i\theta})|<\e$ for $|t-\theta|<\eta$ and every $r\in [0,1]$.
Fix $t\in [0,2\pi[$.  Let $I=I(t)\ss\T$ be  the arc centered at $t$ and with arc length $2\eta$.
Then {\footnotesize
\begin{eqnarray*}
\Bigl|f_r(e^{it})-P[(f_r)|_\T](\rho e^{it})\Bigr|&=&\frac{1}{2\pi}\left|\int_{0}^{2\pi} 
\frac{1-|\rho|^2}{|e^{i\theta}-\rho e^{it}|^2}\;(f_r(e^{it})-f_r(e^{i\theta}))\; d\theta\right| \\
&\leq& \frac{2||f||_\infty}{2\pi }\intop_{\{\theta: e^{i\theta}\in \T\setminus I\}}
\frac{1-|\rho|^2}{|e^{i\theta}-\rho e^{it}|^2}
\;d\theta
 +\frac{\e}{2\pi } \intop_{\{\theta: e^{i\theta}\in I\}} \frac{1-|\rho|^2}{|e^{i\theta}-\rho e^{it}|^2}\;d\theta.
 \end{eqnarray*}}
Note that the second integral is less than $\e$, because the integral $\int_0^{2\pi}P\,d\theta$
of the Poisson kernel is one.

Since for $\eta\leq |\theta-t|\leq \pi$ and $\rho$ close to $1$
\begin{eqnarray*}|e^{i\theta}-\rho e^{it}| &= &|e^{i(\theta-t)}-\rho| \geq |e^{i(\theta-t)}-1|-(1-\rho)\\
&=&2\, |\sin(\mbox{$\frac{\theta-t}{2}$})|-(1-\rho) \\
&\geq& \frac{2\eta}{\pi}-(1-\rho) \geq \frac{\eta}{\pi},
  \end{eqnarray*}
 we see that  the first integral tends to $0$ as $\rho\to 1$. 
Hence, for all $t$,
\begin{equation}\label{pp1}
\sup_{0<r\leq 1}\Bigl|f_r(e^{it})-P[(f_r)|_\T](\rho e^{it})\Bigr|<c\e
\end{equation}
for some $\rho$ sufficiently close to $1$ and independent of $r$.
Now
$$\check f(t):=P[(f_r)|_\T](\rho e^{it})=\sum_{n=0}^\infty a_n(r) r^n\rho^n e^{int}.$$
Because by \eqref{coeff-esti}
\begin{eqnarray*}
L:=\Bigl|\sum_{n= N+1}^\infty  a_n(r) r^n\rho^n e^{int}\Bigr|\leq ||f||_{\infty}\; \sum_{n=N+1}^\infty \rho^n,
\end{eqnarray*}
we see that $L<\e$ whenever $N$ is sufficiently large.  Note that $N$ is independent of $r$.
Hence
$$\left|\check f(t)-\sum_{n=0}^N a_n(r) r^n\rho^n e^{int}\right| <\e.$$
We conclude  from \eqref{pp1} that
\begin{equation}\label{pppeps}
\Bigl|f_r(e^{it})-\sum_{n=0}^N a_n(r) \rho^n\, r^ne^{int}\Bigr|<c\,\e+\e =\tilde c\, \e
\end{equation}
for every $r\in \,]0,1]$. This proves our claim \eqref{ppp}.

Now the coefficients $a_n(r) $ of the polynomial $p_N(z):=\sum_{n=0}^N a_n(r)\rho^nz^n$
are continuous functions for  $r\in \;]0,1]$ \footnote{ Note that, in general,  $a_n(r)$ is not continuous at $r=0$ for $n\geq 0$, as the function $f(z)= z/|\sqrt{|z|}=z/\sqrt r\in A_{co}$ shows.}
and $a_0(r)$ is continuous on $[0,1]$.
 In order to be able to use Weierstrass' approximation theorem,  we need to modify the $a_n(r)$ a little bit near the origin for $n\not=0$ by multiplying them with $r^N$, $r$ close to $0$.
 According to \eqref{gegennull}, for $\e>0$, there exists  $\delta>0$ such that for  $0\leq r\leq \delta$,
 $$\sum_{n=1}^N |a_n(r)r^n|<\e/2 .$$
 Let $\kappa\in C([0,1],[0,1])$ be defined as $\kappa(r)=r^N$ whenever $0\leq r\leq \delta/2$
 and $\kappa(r)=1$ for $\delta \leq r\leq 1$. Consider the functions
 $$p^*_N(z):=a_0(r)+\sum_{n=1}^N \kappa(r) a_n(r) \rho^nz^n.$$
Note that the new coefficients, $\kappa(r)a_n(r)$, are continuous on $[0,1]$, due to   \eqref{gegennull} and
\eqref{contiar}.
 Now, for   $\delta\leq r\leq 1$ and $|z|=r$,
\begin{eqnarray*}|p_N(z)-p^*_N(z)|  &\leq &|1-\kappa(r)|\;\Bigl|\sum_{n=1}^N  a_n(r)\rho^nz^n\Bigr|=0.
\end{eqnarray*}
If $0\leq r\leq \delta$ and $|z|=r$, then
\begin{eqnarray*}|p_N(z)-p^*_N(z)|  &=& |1-\kappa(r)| \Bigl|\sum_{n=1}^N(a_n(r) r^n)\rho^n e^{int}\Bigr|\\
&\leq& 2 \sum_{n=1}^N |a_n(r)|\,r^n \leq \e
\end{eqnarray*}
Hence $p_N^*$ is uniformly close to $p_N$.  Now 
for every $n\geq 1$,
there is a polynomial $q_n(r):=\sum_{j=0}^{M(n)}\; b_j(n) r^j$ such that
$$\max_{0\leq r\leq 1} |q_n(r)-\kappa(r) a_n(r)|<\e/(N+1).$$
For $n=0$ we choose $q_0\in \C[x]$ such that $|a_0(r)-q_0(r)|<\e/(N+1)$ on $[0,1]$.
Consequently, with $z=re^{it}$,
{\tiny\begin{eqnarray*}
\Bigl|f(re^{it})-\sum_{n=0}^N q_n(r)\rho^n r^ne^{int}\Big|&\leq &
\Bigl|f(re^{it})-\sum_{n=0}^N a_n(r)\rho^nr^ne^{int}\Bigr|+|p_N(z)-p_N^*(z)|
+\sum_{n=0}^N  \e/(N+1)\\
&\buildrel\leq_{}^{\eqref{pppeps}} & 
\tilde c\, \e+\e+\sum_{n=0}^N  \e/(N+1)=c^*\e.
\end{eqnarray*}}
In other words, for any $z\in {\bf D}$,
$$|f(z)-\sum_{n=0}^N q_n(|z|)\, \rho^nz^n|<c^*\e. $$
Thus $A^*=A_{co}$. We also deduce that
$$\widehat A_{co}=\bigl\{f\in C(K,\C): f(\cdot,r)\in A(r{\bf D})\;\; \forall\;  r\in \;]0,1]\;\bigr\}.$$
\end{proof}


\section{The stable ranks of the cone algebra}

The following concepts were originally introduced by H. Bass \cite{ba} and M. Rieffel \cite{ri}.
  \begin{definition}
  Let $A$ be a  commutative unital Banach algebra over $\R$ or $\C$.
\begin{enumerate}
\item [(1)] An $(n+1)$-tuple $(f_1,\dots,f_n,g)\in U_{n+1}(A)$ is  
called {\sl reducible}  (in $A$)  if there exists 
 $(a_1,\dots,a_n)\in A^n$ such that $(f_1+a_1g,\dots, f_n+a_ng)\in U_n(A)$.
\item [(2)] The {\sl Bass stable rank} \index{Bass stable rank} of $A$, denoted by $\bsr A$,  is the smallest integer $n$ such that every element in $U_{n+1}(A)$ is reducible. 
 If no such $n$ exists, then $\bsr A=\infty$. 
 \item [(3)] The {\it topological stable rank}, $\tsr A$, of $A$ is the least integer
  $n$ for which $U_n(A)$ is dense in $A^n$, or infinite if no such $n$ exists.  
  \end{enumerate}
  \end{definition}
We refer the reader to the work of L. Vasershtein \cite{va}, G. Corach and A. Larotonda 
\cite{cl,cl1}, G. Corach and D. Su\'arez \cite{cs, cs1,cs2,cs3}
and the authors 
dealing with numerous  aspects of these notions in the realm of function algebras. 
The computation of the stable rank of our algebras above  will be based on the following three results
from a higher analysis course:

\begin{theo}[A]\label{lipi}
Let $U\ss\R^n$ be open and  $f:U\to \R^n$ a map. 
Suppose that  $E\ss U$ has $n$-dimensional 
Lebesgue measure zero. Let $0<\alpha\leq 1$. Then,   under each of the following conditions,
$f(E)$ has $n$-dimensional Lebesgue measure zero, too:
 
\begin{enumerate}
\item [(1)] $f$ satisfies a H\"older-Lipschitz condition (of order $\alpha$) on $U$; that is, there is
$M>0$ such that 
 $$\mbox{$||f(x)-f(y)|| \leq M\, ||x-y||^\alpha$ for every $x,y\in U$}.$$
\item [(2)] $f\in C^1(U,\R^n)$.
\end{enumerate}
\end{theo}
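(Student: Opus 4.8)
The plan is to derive both parts from a single covering estimate and to reduce (2) to (1). For (2) I would first observe that a $C^1$ map is locally Lipschitz: on any closed ball $\ov B\ss U$ the continuous gradient is bounded, so $\|f(x)-f(y)\|\le M_{\ov B}\,\|x-y\|$ on $\ov B$; exhausting $U$ by countably many such balls and using that a countable union of null sets is null, (2) follows once (1) is known with exponent $\alpha=1$. Thus the real content is (1), and I would attack it head on for a general exponent $0<\alpha\le 1$.

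For (1), fix $\e>0$. Since $|E|=0$, I would cover $E$ by countably many cubes $Q_i$ of side $s_i\le 1$ with $Q_i\ss U$ and $\sum_i s_i^{\,n}<\e$. For $x,y\in Q_i$ the H\"older bound gives $\|f(x)-f(y)\|\le M(s_i\sqrt n)^{\alpha}$, so $f(Q_i)$ has diameter at most $M(\sqrt n)^{\alpha}s_i^{\alpha}$ and hence lies in a cube $R_i$ of side $c_n s_i^{\alpha}$ with $c_n:=M(\sqrt n)^{\alpha}$. Summing the volumes of these image cubes yields the single inequality on which everything rests,
\[
|f(E)|_*\le\sum_i|R_i|=c_n^{\,n}\sum_i s_i^{\,\alpha n},
\]
where $|\cdot|_*$ denotes outer Lebesgue measure.

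The step I expect to be the main obstacle is precisely the passage from $\sum_i s_i^{\,n}$ to $\sum_i s_i^{\,\alpha n}$, and here the exponent is decisive. When $\alpha=1$ the right-hand side equals $c_n^{\,n}\sum_i s_i^{\,n}<c_n^{\,n}\e$, so letting $\e\to 0$ gives $|f(E)|=0$ and finishes (1), hence (2). When $0<\alpha<1$, however, the inequality is vacuous: replacing the cover by $N$ equal cubes of side $s=(\e/N)^{1/n}$ keeps $\sum_i s_i^{\,n}=\e$ fixed while $\sum_i s_i^{\,\alpha n}=N^{1-\alpha}\e^{\alpha}\to\infty$, so no covering bound of this shape can force measure zero. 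This is not merely a defect of the method but of the assertion itself: for $0<\alpha<1$ statement (1) fails as worded, the Cantor function on $[0,1]$ being H\"older of order $\alpha=\log 2/\log 3$ while carrying the measure-zero Cantor set onto all of $[0,1]$. Consequently the honest conclusion is that (1) holds exactly in the Lipschitz regime $\alpha=1$, with (2) following by the localization above, and that the general-$\alpha$ formulation should be read with $\alpha=1$.
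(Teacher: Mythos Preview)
The paper does not actually prove this theorem; it is quoted as one of ``three results from a higher analysis course'' and then invoked as a black box in the stable-rank computations. So there is no paper proof to compare against, and your covering argument for the Lipschitz case $\alpha=1$ together with the localization reducing (2) to (1) is the standard route and is correct.

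Your more substantial observation is also correct and worth recording: as written, part (1) is \emph{false} for $0<\alpha<1$. The Cantor function is H\"older of order $\log 2/\log 3$ yet sends the measure-zero Cantor set onto all of $[0,1]$, so the H\"older hypothesis alone cannot force $|f(E)|=0$ when $\alpha<1$. Your diagnosis of where the covering argument breaks (the sum $\sum s_i^{\alpha n}$ is uncontrolled by $\sum s_i^{n}$ when $\alpha<1$) is exactly right. Fortunately this defect in the statement does not touch the paper's applications: in the proof that $\tsr A_{co}\le 2$ the map is explicitly Lipschitz, and in the proof that $\tsr\,{\rm Cyl}(\D)\le 2$ the map is polynomial, hence $C^1$. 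So the theorem should simply be read with $\alpha=1$, and your write-up already says so.
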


A proof of the following version of Rouch\'e's theorem  (continuous-holomorphic pairs)
is in \cite[Theorem 20]{moru}.

\begin{theo}[B]\label{rch}
 
Let  $K\ss\C$ be compact, $f\in C(K,\C)$ and $g\in A(K)$, the set of all functions continuous on $K$
and holomorphic in the interior $K^\circ$ of $K$. Suppose that  on $ \partial K$
$$|f+g|<|f| +|g|.$$
Then  $f$ has a zero  on $K^\circ$ whenever   $g$ has a zero on $K^\circ$.
The converse does not hold, in general. 
\end{theo}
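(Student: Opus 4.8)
The plan is to prove the contrapositive: if $f$ has no zero in $K^\circ$, then $g$ has no zero in $K^\circ$ either. Everything hinges on one elementary fact about the triangle inequality in $\C$, namely that $|a+b|=|a|+|b|$ holds exactly when $a=0$, $b=0$, or $a/b\in(0,\infty)$. Consequently the strict inequality $|f+g|<|f|+|g|$ on $\partial K$ forces $f$ and $g$ to be zero-free on $\partial K$ and $f/g\notin[0,\infty)$ there. Since $\varphi:=|f|+|g|-|f+g|$ is continuous on $K$ and strictly positive on the compact set $\partial K$, there is $\eta>0$ for which $\varphi>0$ — hence $f\ne 0$, $g\ne 0$ and $f/g\notin[0,\infty)$ — throughout the collar $W:=\{z\in K:\operatorname{dist}(z,\partial K)<\eta\}$.

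Assume now $f$ has no zero in $K^\circ$; by the previous step it has none on $\partial K$, so $f$ is zero-free on all of $K$. Suppose, for a contradiction, that $g$ does vanish somewhere in $K^\circ$. As $g$ is zero-free on $W$, every zero of $g$ in $K^\circ$ lies in the compact set $K_0:=\{z\in K:\operatorname{dist}(z,\partial K)\ge\eta\}\ss K^\circ$. I would then work on $\Omega:=\{z\in K:\operatorname{dist}(z,\partial K)>\eta/2\}$, a bounded open subset of $\C$ with $K_0\ss\Omega$, $\overline\Omega\ss K^\circ$, and $\partial\Omega\ss W$. In particular $g$ is holomorphic on a neighbourhood of $\overline\Omega$, zero-free on $\partial\Omega$, and has at least one — hence, its zeros being isolated in the compact set $K_0$, only finitely many — zeros in $\Omega$ (and $g$ cannot vanish identically on a component of $\Omega$, since it would then vanish on $\partial\Omega\ss W$).

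The core of the argument is a degree computation, and this is exactly where the hypothesis ``$f\in C(K,\C)$ only'' must be accommodated: the classical argument principle is not available for $f$, so I would invoke the topological (Brouwer) degree $\deg(\,\cdot\,,\Omega,0)$ of continuous maps $\overline\Omega\to\C$ that do not vanish on $\partial\Omega$ (no smoothness of $\partial\Omega$ is needed). On $\partial\Omega\ss W$ the affine homotopy $H_t:=(1-t)f-tg$ is zero-free: $H_0=f\ne 0$, $H_1=-g\ne 0$, and a zero of $H_t$ with $0<t<1$ would give $f/g=t/(1-t)\in(0,\infty)$, which the first paragraph forbids. Homotopy invariance of the degree, followed by the rotation homotopy $e^{i\pi t}g$, then yields
\[
\deg(f,\Omega,0)=\deg(-g,\Omega,0)=\deg(g,\Omega,0).
\]
On the right, $\deg(g,\Omega,0)$ is the number of zeros of the holomorphic $g$ in $\Omega$ counted with multiplicity (argument principle, or additivity of the degree over small disks about the zeros of $g$), hence $\ge 1$; on the left, $f$ is zero-free on $\overline\Omega$, so $\deg(f,\Omega,0)=0$. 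This contradiction shows $g$ has no zero in $K^\circ$, as desired. The \emph{only} delicate points I anticipate are the two bookkeeping steps already isolated above — extracting from $|f+g|<|f|+|g|$ precisely the ``no same-direction parallelism'' statement on a whole collar of $\partial K$, and choosing $\Omega$ so that it captures all zeros of $g$ while keeping $\partial\Omega$ inside that collar — both handled by the distance-to-$\partial K$ function.

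For the closing remark that the converse fails it suffices to exhibit one example. Let $K={\bf D}$, let $g\equiv i$ (holomorphic, with no zero in $\D$), and let $f(z)=|z|^2-\tfrac14\in C({\bf D},\C)$, which vanishes along the circle $|z|=\tfrac12\ss\D$. On $\T$ we have $f\equiv\tfrac34$, so $|f+g|=|\tfrac34+i|=\tfrac54<\tfrac74=|f|+|g|$; thus the hypotheses hold, yet $f$ has zeros in $K^\circ$ while $g$ has none. Heuristically, the zero set of the non-holomorphic $f$ is topologically invisible, since $\deg(f,\D,0)=0$.
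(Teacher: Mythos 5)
Your argument is correct. The only point worth flagging is that the paper does not actually prove Theorem (B) at all: it is quoted from the authors' reference \cite{moru}, where it is obtained via $\overline\partial$-calculus (a Cauchy--Green/Pompeiu-type argument-principle identity for continuous--holomorphic pairs). Your route is genuinely different and entirely topological: you extract from the strict triangle inequality the statement that $f,g$ are zero-free with $f/g\notin[0,\infty)$ on a whole collar $W$ of $\partial K$, trap all zeros of $g$ in an open set $\Omega$ with $\partial\Omega\subseteq W$, and then compare Brouwer degrees via the segment homotopy $(1-t)f-tg$ (zero-free on $\partial\Omega$ precisely because $f/g$ avoids $(0,\infty)$ there) and the rotation $e^{i\pi t}g$, concluding $0=\deg(f,\Omega,0)=\deg(g,\Omega,0)\geq 1$. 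All the supporting steps check out: $\Omega=\{z\in K:\operatorname{dist}(z,\partial K)>\eta/2\}$ is indeed open with $\overline\Omega\subseteq K^\circ$ and $\partial\Omega$ inside the collar, $g$ cannot vanish identically on a component of $\Omega$ (its closure meets $\partial\Omega\subseteq W$), so $g$ has finitely many zeros of positive total multiplicity, and the degree of the zero-free $f$ vanishes. What the $\overline\partial$ approach buys is an explicit integral-formula proof staying within complex analysis; what your degree-theoretic proof buys is independence from any smoothing or integral representation of the merely continuous $f$, at the cost of importing the standard properties (homotopy invariance, additivity, normalization on holomorphic germs) of the Brouwer degree. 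Your counterexample to the converse ($K={\bf D}$, $g\equiv i$, $f(z)=|z|^2-\tfrac14$) is also correct, and the closing heuristic --- that the zero set of the non-holomorphic $f$ carries no degree --- is exactly the right way to see why the implication is one-sided.
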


\begin{theo}[C]\label{nonextension}\cite[p. 97]{bu}.
Let $K\ss \C$  be compact, $C$ a  bounded component of $\C\setminus K$ and $\beta\in C$.
Then the function $f(z)=z-\beta$ defined on $K$ is zero-free on $K$, but does not
admit a zero-free extension to $K\union C$.
\end{theo}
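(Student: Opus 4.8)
The plan is to prove the two assertions separately: the zero-freeness on $K$ is immediate, and the non-extendability I would establish by contradiction, feeding the putative extension into the continuous--holomorphic Rouché theorem (Theorem~B) stated just above.

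First the easy part: $f(z)=z-\beta$ is zero-free on $K$ because $\beta\in C\ss\C\setminus K$, so $z\neq\beta$ for every $z\in K$. Before attacking the second claim I would record the relevant point-set topology. Since $C$ is a component of the open set $\C\setminus K$ it is itself open, and its boundary satisfies $\partial C\ss K$: a boundary point $p$ lying in $\C\setminus K$ would sit in some component $C'$ of that open set, and as $p\in\overline C$ this $C'$ would meet $C$, forcing $C'=C$ and hence $p\in C$, a contradiction. Consequently $\overline C=C\union\partial C\ss K\union C$, so $\overline C$ is a compact subset of $K\union C$ on which any extension is defined. Moreover $C\ss(\overline C)^\circ$ (as $C$ is open), which gives $\partial\overline C\ss\overline C\setminus C=\partial C\ss K$ and $\beta\in C\ss(\overline C)^\circ$. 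These inclusions are the crux: they guarantee that the boundary of the compact set I apply Theorem~B to lies inside $K$, where the extension is pinned down.

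Now suppose, for contradiction, that $z-\beta$ admits a continuous zero-free extension $F\colon K\union C\to\C\setminus\{0\}$. I would restrict attention to the compact set $\overline C$ and set $f:=-F|_{\overline C}$ and $g(z):=z-\beta$. Then $f\in C(\overline C,\C)$ is zero-free, and $g$, being entire, lies in $A(\overline C)$. On $\partial\overline C\ss K$ we have $F=z-\beta$, so $f+g=-(z-\beta)+(z-\beta)=0$ there, while $g=z-\beta\neq0$ on $\partial\overline C$ because $\partial\overline C\ss K$ and $\beta\notin K$. Hence $|f+g|=0<2|g|=|f|+|g|$ on $\partial\overline C$, which is exactly the hypothesis of Theorem~B. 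Since $g(z)=z-\beta$ has the zero $\beta$ in the interior $(\overline C)^\circ$, Theorem~B forces $f=-F$ to vanish somewhere in $(\overline C)^\circ\ss K\union C$, contradicting the zero-freeness of $F$. This contradiction proves that no zero-free extension to $K\union C$ exists.

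The only genuinely delicate points are the topological identities $\partial C\ss K$ and $\partial\overline C\ss K$, which are what make the convenient choice $f=-F$ produce the sharp inequality $|f+g|<|f|+|g|$ precisely on the set where $F$ is known; the rest is a direct substitution into Theorem~B. The conceptual reason behind the result is the winding-number obstruction: $z-\beta$ winds once around $0$ along a small circle in $C$, whereas a zero-free continuous extension over the disk that circle bounds would force winding number zero. Matching those windings across all of $C$ rigorously is exactly the technical step that Theorem~B packages for us, so I would favor the Theorem~B argument over a direct homotopy computation.
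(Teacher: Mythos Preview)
Your proof is correct. The paper does not actually prove Theorem~(C); it simply quotes it from Burckel \cite[p.~97]{bu}. So there is no ``paper's own proof'' to compare against.

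That said, your argument is worth a comment because it is not the standard one. Burckel's proof proceeds via the homotopy/winding-number machinery he develops in that chapter: one shows that a zero-free continuous extension of $z-\beta$ to $K\cup C$ would have a continuous logarithm on a small circle around $\beta$, contradicting the fact that the identity (shifted) has winding number $1$ there. You instead bootstrap from Theorem~(B), the continuous--holomorphic Rouch\'e theorem, by taking the compact set $\overline C$, pairing the putative extension $-F$ with the holomorphic $g(z)=z-\beta$, and exploiting that $F$ agrees with $g$ on $\partial\overline C\ss K$ to get $|f+g|=0<|f|+|g|$ there. This is a clean and economical route within the paper's own toolkit, since Theorem~(B) is already stated. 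The topological verifications ($\partial C\ss K$, $\partial\overline C\ss\partial C$, $\beta\in(\overline C)^\circ$) are handled carefully and are exactly what is needed to make the Rouch\'e hypothesis bite on the right set. The trade-off is that your argument now rests on Theorem~(B), which the paper also only cites; but as an internal derivation of (C) from (B) it is entirely sound.
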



\begin{theorem}
$\bsr A_{co}=2$ and $\tsr A_{co}=2$.
\end{theorem}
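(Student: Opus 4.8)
The plan is to establish both equalities by sandwiching. For the topological stable rank, the lower bound $\tsr A_{co} \geq 2$ follows because $A_{co}$ is not inverse-closed in the strong sense needed: indeed $A({\bf D})$ embeds in $A_{co}$ by part (1) of Theorem \ref{theconealg}, and one knows $\tsr A({\bf D}) = 2$; more directly, $U_1(A_{co})$ is the set of invertibles, and by restriction to $\T$ (part (2)) one sees that $z$ cannot be uniformly approximated by invertibles of $A_{co}$, since an invertible of $A_{co}$ restricted to $\T$ lies in $P(\T)^{-1} = A({\bf D})^{-1}|_\T$ and must therefore have winding number zero, whereas $z$ has winding number one — and winding number is stable under small uniform perturbations on $\T$. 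So $U_1(A_{co})$ is not dense in $A_{co}$. For the upper bound $\tsr A_{co} \leq 2$, I would show $U_2(A_{co})$ is dense in $A_{co}^2$: given $(f,g) \in A_{co}^2$ and $\e>0$, I want to perturb to a pair with no common zero on $K = M(A_{co})$. Using the description of $\widehat A_{co}$ from part (6)/Theorem \ref{repa5} as functions on the cone holomorphic on each slice $r{\bf D}$, one perturbs $g$ slightly so that on each slice its zero set is discrete (a finite set depending measurably/continuously on $r$), then perturbs $f$ by a small constant to avoid that thin set; the Hölder–Lipschitz measure-zero result (Theorem A) together with the slicewise holomorphy controls the size of the exceptional set of bad constants. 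This is the step I expect to require the most care: arranging the perturbation to be an element of $A_{co}$ (not merely of $C({\bf D},\C)$) and uniform in the slice parameter $r$, including the degeneration as $r \to 0$.

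For the Bass stable rank, $\bsr A_{co} \leq \tsr A_{co} = 2$ is automatic from the general Rieffel inequality $\bsr A \leq \tsr A$. The lower bound $\bsr A_{co} \geq 2$ is equivalent to producing a non-reducible triple in $U_2(A_{co})$, i.e.\ a pair $(f,g) \in U_2(A_{co})$ for which there is no $a \in A_{co}$ with $f + ag \in A_{co}^{-1}$. I would look for a pair already living essentially on $\T$ or on a slice $r\T$, so that reducibility in $A_{co}$ would force reducibility in $P(r\T)$ or $A(r{\bf D})$, where the obstruction is a winding-number/index argument. Concretely, take a pair like $(z, 1-|z|^2)$, which by part (7) of Theorem \ref{theconealg} is \emph{not} in $U_2(A_{co})$ — so that exact pair won't do — but a suitably modified pair, genuinely invertible as a $2$-tuple yet with $f|_\T = z$ having winding number $1$, should resist reduction: if $f + ag \in A_{co}^{-1}$ then $(f+ag)|_\T \in P(\T)^{-1}$ has winding number $0$, while $g|_\T \equiv 0$ would force $(f+ag)|_\T = f|_\T = z$, a contradiction. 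So the key is to find $g \in A_{co}$ vanishing on $\T$ (e.g.\ $g = 1 - |z|^2$ or $g=1-|z|$) together with some $f$ such that $(f,g) \in U_2(A_{co})$ genuinely and $f|_\T$ has nonzero index.

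The main obstacle is the second half: reconciling ``$(f,g) \in U_2(A_{co})$'' (no common zero on all of $K$, including the interior and the apex) with ``$g$ vanishes on the top circle $\T$'' and ``$f|_\T$ has winding number one.'' If $g$ vanishes identically on $\T$, then on $\T$ we need $f$ zero-free with index $1$ — fine, $f|_\T = z$ works; but then $(f,g)$ must still have no common zero on the slices $r{\bf D}$ for $r<1$ and at the apex, which constrains $f$ and $g$ in the interior. I would choose $g(z) = (1-|z|)h(z)$ with $h \in A_{co}^{-1}$ chosen so that $g$'s zero set is exactly $\T$, and then solve the B\'ezout equation $af + bg = 1$ on $K \setminus \T$ using the corona-type description from Theorem \ref{repa5}, which is possible precisely because $f = z$ has its only slice-zero at the center of each slice where $g \neq 0$. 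Verifying that the resulting $a, b$ extend continuously to $\T$ — where $g \to 0$ and $f = z \neq 0$, so $a \to 1/z$, which is \emph{not} in $P(\T)$ — is exactly where reducibility fails, and this is what makes the triple non-reducible. Tracking this boundary behavior carefully, and confirming the $2$-tuple really is invertible (so that it is a legitimate element of $U_2$ witnessing $\bsr \geq 2$, rather than being excluded as in part (7)), is the crux of the argument.
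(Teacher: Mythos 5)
Your treatment of the topological stable rank is essentially sound: the lower bound via the winding number of $z$ on $\T$ (invertibles of $A_{co}$ restrict to elements of $P(\T)^{-1}$, which have index $0$) is a reformulation of the paper's Rouch\'e argument. For the upper bound the paper's route is cleaner than your slice-by-slice scheme: approximate the pair $(f,g)$ by $\bigl(p(z,|z|),q(z,|z|)\bigr)$ with $p,q$ polynomials, view the cone $K=M(A_{co})$ as a Lebesgue-null subset of $\R^4$, and apply Theorem (A) to the Lipschitz map $(p,q):K\to\C^2\cong\R^4$ to find arbitrarily small constants $(\e,\e')$ with $(p-\e,q-\e')$ jointly zero-free on $K$; this sidesteps entirely the difficulty you flag of keeping your perturbations inside $A_{co}$ and uniform in $r$.

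The genuine gap is in your lower bound $\bsr A_{co}\geq 2$. Your plan requires a pair $(f,g)\in U_2(A_{co})$ with $g|_\T\equiv 0$ and $\ind(f|_\T)=1$, so that $(f+ag)|_\T=f|_\T$ blocks invertibility. No such pair exists. Membership in $U_2(A_{co})$ means that $\widehat f$ and $\widehat g$ have no common zero on the cone $K=M(A_{co})$, not merely that $f$ and $g$ have no common zero on $\mathbf D$ (part (7) of Theorem \ref{theconealg} shows these conditions genuinely differ). If $g|_\T\equiv 0$, then on the top slice $\widehat g(w,1)=P[g|_\T](w)\equiv 0$ for all $|w|\leq 1$: the Gelfand transform of $g$ vanishes on the \emph{entire} top disk $D_1$, not just on its boundary circle. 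Hence $\widehat f(\cdot,1)$, the holomorphic extension of $f|_\T$, must be zero-free on $D_1$, and by the argument principle this forces $\ind(f|_\T)=0$ --- destroying the very obstruction you want to exploit. The same collapse occurs on any slice $r\T$, so no winding-number obstruction located on a horizontal circle can coexist with invertibility of the $2$-tuple. The paper escapes this by choosing $g(z)=z^2-|z|^2(1-|z|)^2$, whose Gelfand zero set is a \emph{vertical} Jordan curve $J=\{(\pm t(1-t),0,t): 0\leq t\leq 1\}$ cutting transversally across the slices, together with $f(z)=z+i(|z|-\frac{1}{2})$, which restricts on $J$ to the translate $w\mapsto w-i/2$ of the identity; reducibility would produce a zero-free extension of this map to the planar region enclosed by $J$, which contains the point $i/2$, contradicting Theorem (C). You need an obstruction of this transverse, topological kind rather than an index computation on a boundary circle.
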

\begin{proof}
$\bullet$~~ We first show that $\tsr A_{co}\leq 2$. Let $(f,g)\in (A_{co})^2$. Choose polynomials $p(z,w)$ and $q(z,w)$\
in $\C[z,w]$ such that 
$$\mbox{$|f(z)-p(z,|z|)|+ |g(z)-q(z,|z|)|<\e $ ~~ for every $ z\in {\bf D}$.}$$
Let $P(z)=p(z,|z|)$ and $Q(z)=q(z,|z|)$. 
By the proof of assertion (6) of Theorem \ref{theconealg}, 
the Gelfand transforms of $P$ and $Q$  are polynomials, too, such that
$$\widehat P(z,r)=p(z,r) \;{\rm and}\; \widehat Q(z,r)=q(z,r).$$
We shall now use  Theorem (A). To this end, we observe that the function
$\widehat P$  and $\widehat Q$ satisfy a Lipschitz condition on  $K$.
Let $$\tilde K=\{(x,y, t,v)\in \R^4:\; v=0, \; 0\leq t\leq 1,\; \sqrt{x^2+y^2}\leq t \},$$
which is of course nothing else than our cone $K$, resp. $\tilde C$ (but viewed as a set in $\R^4$).
Then  $\tilde K$ has 4-dimensional Lebesgue measure 
zero. Now we look at the map
{\small$$\mu: \begin{cases} \tilde K \ss \R^4 &\to \R^4 \\ (x,y,t,v) & \mapsto 
( {\rm Re}\; p(x+iy,t), {\rm Im }\; p(x+iy,t), {\rm Re}\; q(x+iy,t), {\rm Im}\; q(x+iy,t)).\end{cases}
$$}
Then $\mu$ satisfies a Lipschitz condition on $\tilde K$, too. Hence, by  Theorem (A),
$\mu(\tilde K)$ has measure zero in $\R^4$.  
Thus there exists a nullsequence $(\e_n, \e'_n)$ in $\C^2$
such that  $p(z,t)-\e_n$ and $q(z,t)-\e'_n$ have no  common zero on 
$$K=\{(z,t)\in \mathbf D\times[0,1]: |z|\leq t\}.$$
Since $K=M(\widehat A_{co})\sim M(A_{co})$, these pairs are invertible in
 $\widehat A_{co}$ and so 
 $$\big(p(z,|z|)-\e_n,~q(z,|z|)-\e_n'\big )\in U_2(A_{co}).$$ 
 Hence $\tsr A_{co}\leq 2$.\\

$\bullet$~~ Next we  prove that $\tsr A_{co}\geq 2$.  Let $f(z)=z$. If we suppose that there exists
$u\in (A_{co})^{-1}$ such that $||u-z||_\infty<1/2$, then on $\T$
$$|u(z)-z|<\frac{1}{2}<1\leq |z|+|u(z)|.$$
Hence, by Rouch\'e's Theorem (B),  $u$ has a zero in ${\bf D}$. Thus, $u$ is not invertible in $A_{co}$. To sum up, we showed  that $\tsr A_{co}=2$.\\

$\bullet$~~ Since $A_{co}$ is a Banach algebra,  we have $1\leq \bsr A_{co}\leq \tsr A_{co}\leq 2$.
It remains to prove  that $\bsr A_{co}\geq 2$. 
The idea is to unveil a function $g\in A_{co}$ such that the zero set 
of $\widehat g$ on 
\begin{eqnarray*}
K&=& \{(z,t)\in \C\times [0,1]: |z|\leq t, 0\leq t\leq 1\}\\
&=&\{(x,y,t)\in \R^3: \sqrt{x^2+y^2}\leq t,\; 0\leq t\leq 1\}.
\end{eqnarray*}  
is a Jordan curve $J$ contained in the plane $y=0$ and a function $f\in A_{co}$
satisfying  $Z(\widehat f)\inter Z(\widehat g)=\emp$  such that $\widehat f$ is a translation 
of the identity map on $J$.

  \begin{figure}[h]
   \hspace{-1,3cm}
   \scalebox{.45} {\includegraphics{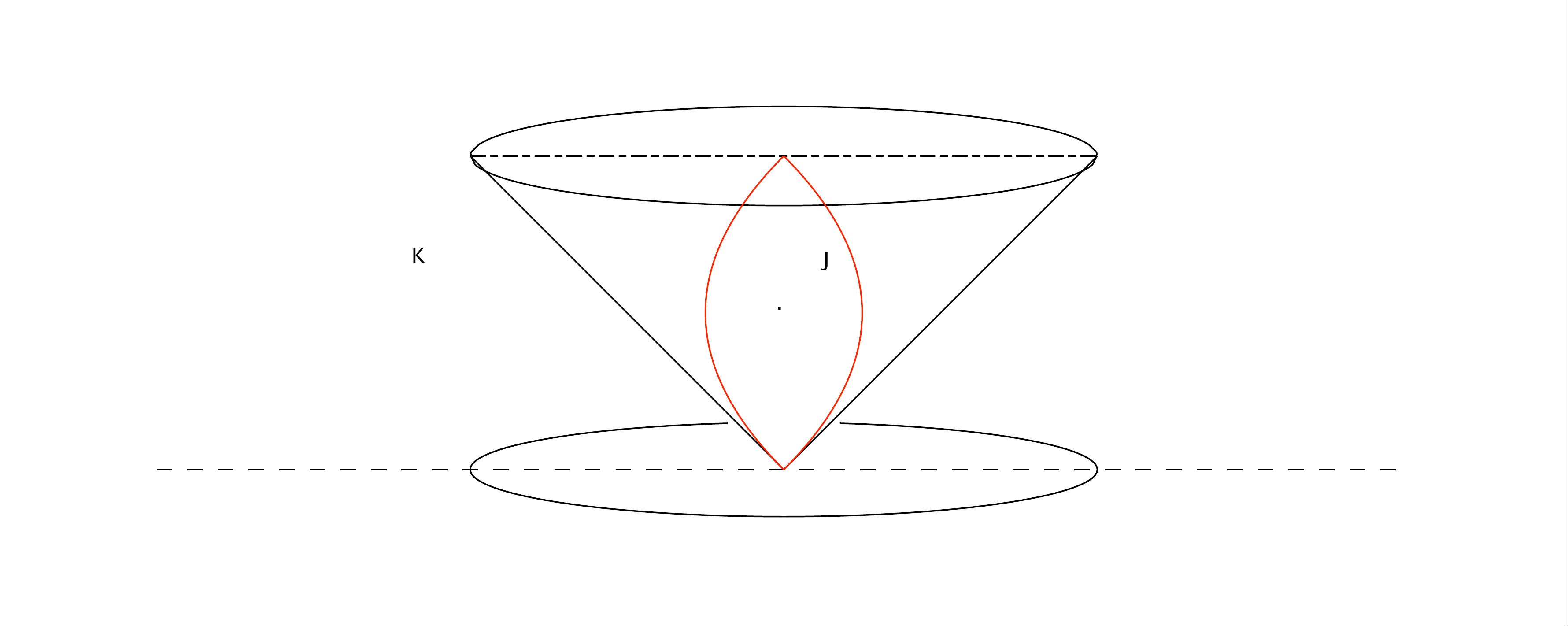}} 
\caption{ The spectrum of the cone algebra and $Z(\widehat g)=J$}
\end{figure}

So let 
$$\mbox{$f(z)=z+ i(|z|- \frac{1}{2})$ and $g(z)= z^2-|z|^2(1-|z|)^2$}.$$
 Then $f$ and $g$ 
belong to $A_{co}$. Their Gelfand transforms are given by
$$\mbox{$\widehat f(z,t)=z+ i (t-\frac{1}{2})$ and $\widehat g(z,t)=z^2-t^2(1-t)^2$}.$$
Since it is more convenient to work with $\R^2$-valued functions (instead of $\C$-valued ones)
when they are defined on $K$ ($K$ viewed as a subset of $\R^3$ instead of $\C\times \R)$,  we 
put 
$$F(x,y,t):=\Big({\rm Re}\,\widehat f(x+iy,t),{\rm Im}\,\widehat f(x+iy,t)\Big) $$
 and deduce 
 the following representation of the zero set of $\widehat g$ and the associated    action of $\widehat f$: 
$$Z(\widehat g)=\{\big(\pm t(1-t), t\big)\in \C\times \R: 0\leq t\leq 1\}=
\{\big(\pm t(1-t), 0, t\big)\in \R^3: 0\leq t\leq 1\},$$
and
\begin{eqnarray*}
F\big(\pm t(1-t), 0,  t\big)&=&
\Big({\rm Re}\,\widehat f \big(\pm t(1-t),  t\big),\; {\rm Im}\,\widehat f \big(\pm t(1-t),  t\big)\Big)\\
&=& \big(\pm t(1-t), t-\frac{1}{2}\big).
\end{eqnarray*} 
Then $J:=Z(\widehat g)$ is a Jordan curve contained in  $K$ and $J$ does not meet $Z(\widehat f)$.  Hence $(f,g)\in U_2(A_{co})$.  Moreover, we see that $F|_J$ is a translation map; in fact
 using complex coordinates  
in the plane $\{(x,y,t)\in \R^3: y=0\}$,  and putting  $w=x+it$, then
the action of $F$ on $J$ can be written as
$\tilde F(w)=w-i/2$, because with $x=\pm t(1-t)$, $\tilde F(x+it)=x+i(t-1/2)=w-i/2$.

In view of achieving a contradiction, 
suppose now that $(f,g)$ is reducible in $A_{co}$.  Then 
$$\widehat u:=\widehat f+ \widehat a\widehat g$$
 is a zero-free function  on $K$ for some $a,u\in A_{co}$. 
 Restricting $\widehat f$ to $Z(\widehat g)$, we find that the
translated   identity mapping on the Jordan curve $Z(\widehat g)$
has  a zero-free extension to the interior of that curve in the plane $y=0$.
Since $(0,0,1/2)$  is surrounded by that curve, we get 
a contradiction to Theorem  (C).
We conclude that the pair $(f,g)$ is not reducible in $A_{co}$ and so $\bsr A_{co}\geq 2$.
Putting all together,  $\bsr A_{co}=2$.
\end{proof}



 \section{The cylinder algebra}
 
Suppose that $\{(f_t,g_t): t\in [0,1]\}$ is a family of functions in $A(\bf D)$
such that $$Z(f_t)\inter Z(g_t)=\emp$$ for every $t$. By the Nullstellensatz for the disk algebra,
for each parameter $t$, there is  a solution $(x_t,y_t)\in A({\bf D})^2$ to the 
B\'ezout equation $x_tf_t+y_tg_t=1$. If the family $\{(f_t,g_t):t\in [0,1]\}$ 
depends continuously on $t$, 
do there exist solutions  to the B\'ezout equation that also depend continuously on $t$?
This problem has an affirmative answer and is best described by introducing the {\it cylinder algebra}:
\index{cylinder algebra}
$$\mbox{${\rm Cyl}({\D})=\big\{f\in C\big({\bf D}\times [0,1],\C\big): \;f(\cdot, t)\in A(\bf D)\;$ for all
$t\in [0,1] \big\}$}.$$

\begin{proposition}\label{cyl}
Let ${\rm Cyl}({\D})$ be the cylinder algebra. Then 
\begin{enumerate}
\item [(1)] ${\rm Cyl}(\D)$ is a uniformly closed subalgebra  of $C({\bf D}\times [0,1],\C)$.
\item [(2)] The set $\C[z,t]$ of  polynomials  of the form 
$$\sum_{j,k=0}^N a_{j,k} z^j t^k,~~  a_{j,k}\in \C,\;  N\in \N$$
is dense in ${\rm Cyl}(\D)$.
\item [(3)] $M({\rm Cyl}({\D}))=\{\delta_{(a,t)}: (a,t)\in {\bf D}\times [0,1]\}$, where
$$\delta_{(a,t)}: \begin{cases} {\rm Cyl}({\D}) & \to \C\\ f& \mapsto f(a,t).
\end{cases}$$
\item [(4)] An ideal $M$ in ${\rm Cyl}({\D})$ is maximal if and only if
$$M=M(z_0,t_0):=\{f\in {\rm Cyl}({\D}): f(z_0,t_0)=0\}$$
for some $(z_0,t_0)\in {\bf D}\times [0,1]$. In particular, ${\rm Cyl}({\D})$ is  natural on
${\bf D}\times [0,1]$.
\item [(5)]  Let $f_j\in {\rm Cyl}({\D})$, $j=1,\dots, n$.  Then the B\'ezout equation 
$\sum_{j=1}^n x_jf_j=1$  admits a solution in ${\rm Cyl}({\D})$ if and only if the functions $f_j$
do not have a common zero on  the cylinder ${\bf D}\times [0,1]$.
\end{enumerate}
\end{proposition}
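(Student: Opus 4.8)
\emph{Proof proposal.}

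The plan is to treat (1)--(4) as bookkeeping and then obtain (5) from naturality by a standard Gelfand-theory argument. For (1) one only checks that the slicewise condition ``$f(\cdot,t)\in A({\bf D})$'' is preserved under sums, products, scalar multiples and uniform limits --- the last because a uniform limit on the compact set ${\bf D}\times[0,1]$ restricts, for each fixed $t$, to a uniform limit on ${\bf D}$ of disk-algebra functions. For (2) I would give the elementary argument: replace $f$ by a dilate $f_\rho(z,t):=f(\rho z,t)$, which tends to $f$ uniformly on ${\bf D}\times[0,1]$ as $\rho\uparrow 1$ by uniform continuity of $f$; expand $f_\rho(z,t)=\sum_{n\ge 0}a_n(t)\,\rho^n z^n$, where $a_n(t)=\frac{1}{2\pi}\int_0^{2\pi}f(e^{i\theta},t)e^{-in\theta}\,d\theta$ are the Taylor coefficients of $f(\cdot,t)\in A({\bf D})$, continuous in $t$ (again by uniform continuity of $f$) and bounded by $\|f\|_\infty$; truncate the series (the tail being $\le\|f\|_\infty\sum_{n>N}\rho^n$, uniformly in $(z,t)$); and approximate each $a_n$ on $[0,1]$ by a polynomial via Weierstrass, landing in $\C[z,t]$. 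Alternatively, Bishop's antisymmetric decomposition does it in one stroke, the maximal antisymmetric sets of $\ov{\C[z,t]}$ being the slices ${\bf D}\times\{t_0\}$, on which $\ov{\C[z,t]}$ reduces to $A({\bf D})$, because $t$ is real-valued.

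For (3): every $\delta_{(a,t)}$, $(a,t)\in{\bf D}\times[0,1]$, is clearly a character, and conversely, for $m\in M({\rm Cyl}(\D))$ one sets $a:=m(z)$, $s:=m(t)$; then $|a|\le\|z\|_\infty=1$, while $s\in[0,1]$ because for $c\notin[0,1]$ the element $t-c$ is invertible in ${\rm Cyl}(\D)$ (its reciprocal is bounded, continuous, and independent of $z$, hence slicewise constant). Thus $m$ and $\delta_{(a,s)}$ agree on every monomial $z^jt^k$, hence on $\C[z,t]$, hence --- by (2) and continuity --- on all of ${\rm Cyl}(\D)$. For (4): in a commutative unital Banach algebra the maximal ideals are precisely the kernels of characters, so (3) identifies them with the $M(z_0,t_0)$; and $(z_0,t_0)\mapsto\delta_{(z_0,t_0)}$, being a continuous injection (the functions $z,t$ separate points) of the compact space ${\bf D}\times[0,1]$ into the Hausdorff space $M({\rm Cyl}(\D))$, is a homeomorphism, which is the asserted naturality.

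For (5), necessity is immediate --- evaluate $\sum_jx_jf_j=1$ at a would-be common zero. For sufficiency I would invoke the general fact, valid in any commutative unital Banach algebra $A$, that $(f_1,\dots,f_n)\in U_n(A)$ if and only if $\widehat f_1,\dots,\widehat f_n$ have no common zero on $M(A)$: if they generated a proper ideal it would lie in some maximal ideal, which by Gelfand--Mazur is the kernel of a character killing all the $f_j$; the converse is clear. By (3) the spectrum of ${\rm Cyl}(\D)$ is ${\bf D}\times[0,1]$ with $\widehat f(\delta_{(a,t)})=f(a,t)$, so this says exactly that $\sum_jx_jf_j=1$ is solvable in ${\rm Cyl}(\D)$ iff the $f_j$ have no common zero on ${\bf D}\times[0,1]$. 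A constructive alternative: for each $t_0$, a fixed disk-algebra solution $(x_j^{t_0})$ to $\sum_jx_j^{t_0}f_j(\cdot,t_0)=1$ --- which exists by the Nullstellensatz for $A({\bf D})$ --- keeps $\sum_jx_j^{t_0}f_j(\cdot,t)$ within $\tfrac12$ of $1$ in $A({\bf D})$ for $t$ near $t_0$, hence invertible, so dividing through yields a continuous solution over an interval $I_{t_0}\ni t_0$; one then glues by a partition of unity $\{\varphi_k\}$ on $[0,1]$ with $\operatorname{supp}\varphi_k$ compactly inside some $I_{t_k}$, setting $x_j:=\sum_k\varphi_k x_j^{(t_k)}$, which solves the global equation because the solution set is affine and lies in ${\rm Cyl}(\D)$ because the $\varphi_k$ are scalars vanishing off the charts.

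I do not anticipate a serious obstacle. The one genuinely substantive point is the density in (2), from which (3), (4) and hence (5) all cascade, so that the content of (5) is really just the naturality of ${\rm Cyl}(\D)$ --- in sharp contrast with the cone algebra, whose spectrum is the three-dimensional cone $K\supsetneq{\bf D}$ and which accordingly fails property $(Cn)$ for every $n\ge2$ (Theorem \ref{theconealg}(7)). Within (2) the only thing to watch is the continuity in $t$ of the coefficient functions $a_n$; in the constructive variant of (5) the analogous care point is the compact-support condition on the $\varphi_k$, needed so that the glued coefficients are continuous on all of ${\bf D}\times[0,1]$.
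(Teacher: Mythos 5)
Your proposal is correct and follows essentially the same route as the paper: the Taylor-coefficient/dilation/Weierstrass argument for density in (2), the evaluation of a character on the two coordinate functions plus the spectrum of $t$ for (3), and Gelfand theory for (4) and (5) (which the paper simply cites without spelling out). The partition-of-unity construction you sketch for (5) is a valid constructive supplement not present in the paper, but it is not needed once naturality is established.
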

\begin{proof}
(1)  This is clear.

(2) Let $f\in {\rm Cyl}({\bf D})$. Then,  for every fixed $t\in [0,1]$,  $f(\cdot ,t)\in A(\bf D)$
and so $f(\cdot,t)$  admits a Taylor series $\sum_{n=0}^\infty a_n(t) z^n$, where the 
Taylor coefficients are given by
\begin{eqnarray*}a_n(t)&=&\frac{1}{n!}\frac{\partial ^nf}{\partial^n z} (0,t)=
\frac{1}{2\pi i}\int_{|\xi|=1} \frac{f(\xi, t)}{\xi^{n+1}}d\xi.
\end{eqnarray*}
The uniform continuity of $f$ on ${\bf D}\times [0,1]$ now implies that
 $t\mapsto a_n(t)$ is a continuous function on $[0,1]$, because
\begin{eqnarray*}|a_n(t)-a_n(s)| &\leq& \frac{1}{2\pi }\int_{|\xi|=1}\frac{|f(\xi,t)-f(\xi,s)|}{|\xi|^{n+1}}\; |d\xi|\\
&\leq& \e \;\;\text{for $|s-t|<\delta$}.
\end{eqnarray*}
In particular $|a_n(t)|\leq ||f||_\infty$ for all $t\in [0,1]$ and $n\in\N$.
Weierstrass' theorem  now yields polynomials $p_n\in \C[t]$ such that
$$\mbox{$|p_n(t)-a_n(t)|< \e \;2^{-n} $ for every $t\in [0,1]$.}$$
We claim that  for $\rho\in ]0,1[$ sufficiently close to $1$  and $N$  sufficiently large,
the polynomial $q$ given by
$$q(z,t)=\sum_{n=0}^N p_n(t) \rho^n z^n$$
is uniformly close to $f(z,t)$. In fact,  due to uniform continuity again, 
we may  choose  $\rho\in ]0,1[$ so that $|f(z,t)-f(\rho z,t)|<\e$
for every $(z,t)\in {\bf D} \times [0,1]$. Hence 
\begin{eqnarray*}
|f(z,t)-q(z,t)| &\leq& |f(z,t)-f(\rho z, t)| + |f(\rho z,t) -q(z,t)|\\
&\leq & \e+ \sum_{n=0}^N |p_n(t)-a_n(t)| \;\rho^n\, |z|^n + \sum_{n=N+1}^\infty
|a_n(t)|\; \rho^n\, |z|^n\\
&\leq &\e+\e \sum_{n=0}^N 2^{-n} +||f||_\infty  \sum_{n=N+1}^\infty \rho^n\\
&\leq & 3\e  + ||f||_\infty  \frac{\rho^{N+1}}{1-\rho}\leq 4 \e
\end{eqnarray*}
for $N$ large.

(3)  Let $m\in M( {\rm Cyl}({\D}))$ and denote by $\bs c$  the coordinate function $\bs c(z,t):=z$
and by $\bs r$ the coordinate function $\bs r(z,t)=t$.  Note  that $\bs c, \bs r\in {\rm Cyl}({\D})$.
Let $$(z_0,t_0):=(m(\bs c), m(\bs r)).$$
Then $z_0\in  {\bf D}$ because $|m(\bs c)|\leq ||\bs c||_\infty=1$. Now $t_0\in \sigma(\bs r)$,
the spectrum of $\bs r$ in ${\rm Cyl}({\D})$.
 Because for $\lambda\in\C$
the function $\bs r-\lambda\in {\rm Cyl}({\D})^{-1}$ if and only if
$\lambda\notin [0,1]$, we see that $t_0=m(\bs r)\in [0,1]$.
Consequently, $(z_0,t_0)\in {\bf D} \times [0,1]$.

Given $f\in  {\rm Cyl}({\D})$,  let $(p_n)$ be  a sequence of polynomials in $ \C[z,t]$ converging uniformly on ${\bf D} \times [0,1]$ to $f$. Then
$$m(p_n)=p_n(z_0,t_0)\to f(z_0,t_0).$$
Hence $m=\delta_{(z_0,t_0)}$.

(4) and (5)  These assertions follow from Gelfand's theory.
\end{proof}
\bigskip

Recall that  the cylinder algebra was defined as
$${\rm Cyl}({\D})=\{f\in C({\bf D}\times [0,1],\C): \;f(\cdot, t)\in A(\bf D)\}.$$
For technical reasons, we  let vary $t$ now in the interval $[-1,1]$. 
In this subsection we determine the Bass and topological stable ranks of ${\rm Cyl}({\D})$
\footnote{ Corach and Su\'arez  determined  in \cite[p. 5]{cs} the Bass stable rank of $C([0,1], A(\D))$,
which coincides with ${\rm Cyl}(\D)$, by using advanced  methods from algebraic topology as well
as the Arens-Royden theorem.}.
The original question that led us to consider this algebra, was the following: 
let 
$$\F:= \{(f_t,g_t): t\in [-1,1]\}$$ 
be a family of disk-algebra functions with $Z(f_t)\inter Z(g_t)=\emp$.
Then by the Jones-Marshall-Wolff Theorem \cite{jmw}, 
for each parameter $t$, there is $(u_t,y_t)\in A({\bf D})^2$,
$u_t$ invertible,  such that
 $u_tf_t+y_tg_t=1$. If the family $\F$ depends continuously on $t$, 
do there exist solutions  to this type of the B\'ezout equation that also depend continuously on $t$?
Quite surprisingly, this is no longer the case. This stays in contrast to the unrestricted
B\'ezout equation $x_tf_t+y_tg_t=1$ dealt with in Proposition \ref{cyl}.  
Here is the outcome:

\begin{theorem}\label{bsr-cyl}
If ${\rm Cyl}(\D)$ is the cylinder algebra, then  $\bsr {\rm Cyl}(\D)=\tsr {\rm Cyl}(\D)=2$.
\end{theorem}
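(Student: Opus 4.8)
The plan is to prove the two equalities $\tsr{\rm Cyl}(\D)=2$ and $\bsr{\rm Cyl}(\D)=2$ by exhibiting matching upper and lower bounds, exactly parallel to the treatment of $A_{co}$ in the previous section, but now exploiting the simpler product structure $M({\rm Cyl}(\D))={\bf D}\times[-1,1]$ from Proposition \ref{cyl}(3). Since ${\rm Cyl}(\D)$ is a commutative unital Banach algebra, we automatically have $1\le\bsr{\rm Cyl}(\D)\le\tsr{\rm Cyl}(\D)$, so it suffices to show $\tsr{\rm Cyl}(\D)\le 2$ and $\bsr{\rm Cyl}(\D)\ge 2$.

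For the upper bound $\tsr{\rm Cyl}(\D)\le 2$, I would take a pair $(f,g)\in{\rm Cyl}(\D)^2$ and approximate both uniformly by polynomials $p(z,t)$, $q(z,t)$ in $\C[z,t]$ (Proposition \ref{cyl}(2)). The Gelfand transforms of $p,q$ on the cylinder ${\bf D}\times[-1,1]\subset\R^3$ are again polynomials, hence satisfy a Lipschitz condition on that compact set; the cylinder has $3$-dimensional Lebesgue measure zero when viewed inside $\R^3\times\{0\}\subset\R^4$ — more convenient still, the map $(x,y,t)\mapsto({\rm Re}\,p,{\rm Im}\,p,{\rm Re}\,q,{\rm Im}\,q)$ from the $3$-dimensional cylinder into $\R^4$ has image of $4$-dimensional measure zero by Theorem (A). So there is a nullsequence $(\e_n,\e_n')$ with $p-\e_n$ and $q-\e_n'$ sharing no common zero on ${\bf D}\times[-1,1]$, which by Proposition \ref{cyl}(5) (naturality) makes $(p-\e_n,q-\e_n')$ invertible in ${\rm Cyl}(\D)$ and arbitrarily close to $(f,g)$.

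For the lower bound $\bsr{\rm Cyl}(\D)\ge 2$, I would mimic the $A_{co}$ argument but using the continuously-varying Bézout data motivation stated in the paragraph preceding the theorem. The idea is to construct $f,g\in{\rm Cyl}(\D)$ so that $Z(\widehat g)$ is a Jordan curve $J$ lying in a two-dimensional slice of the cylinder (e.g.\ the plane $y=0$, i.e.\ $z$ real), with $Z(\widehat f)\cap Z(\widehat g)=\emptyset$ so $(f,g)\in U_2({\rm Cyl}(\D))$, and such that $\widehat f$ restricted to $J$ is (a translate of) the identity map on $J$, wrapping once around a point $\beta$ interior to $J$ in that slice. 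For instance one can adapt the pair $f(z)=z+i(t-\tfrac12)$, $g(z)=z^2-t^2(1-t)^2$ used for $A_{co}$, now with $t$ ranging over an interval where $t^2(1-t)^2$ describes a suitable zero set inside the cylinder; since $|z|\le 1$ there is more room than in the cone, so an appropriate normalization (rescaling $g$ so its zero curve stays in ${\bf D}\times[-1,1]$) must be chosen. If $(f,g)$ were reducible, $\widehat f+\widehat a\,\widehat g$ would be zero-free on the cylinder, hence its restriction to the interior of $J$ inside the slice $y=0$ would give a zero-free extension to $J\cup C$ of the map $z\mapsto z-\beta$, where $C$ is the bounded component of the complement of $J$ in that plane containing $\beta$ — contradicting Theorem (C). Hence $(f,g)$ is not reducible and $\bsr{\rm Cyl}(\D)\ge 2$.

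The main obstacle is the lower-bound construction: one must arrange the Jordan curve $J=Z(\widehat g)$ to sit genuinely inside the cylinder ${\bf D}\times[-1,1]$ (so the constraint $|z|\le1$ is respected along $J$) while simultaneously keeping $Z(\widehat f)$ disjoint from $J$ and ensuring $\widehat f|_J$ has winding number one about an interior point of $J$ in a fixed planar slice. The cone argument handled this by coupling the $z$-coordinate to the radial parameter $|z|=t$; in the cylinder, $z$ and $t$ vary independently, so I expect to choose $g(z,t)=z^2-\phi(t)^2$ with $\phi$ a continuous real function vanishing at the endpoints of $[-1,1]$ and with $\max|\phi|<1$, whose graph $\{(\pm\phi(t),0,t)\}$ is a Jordan curve in the plane $y=0$, and $f(z,t)=z+i(t-c)$ for a constant $c$ chosen so that $(c,0)$ lies in the bounded region enclosed by $J$ in that plane; verifying $Z(\widehat f)\cap J=\emptyset$ and that the winding is nontrivial is then a short direct computation. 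Once the geometry is set up correctly, invoking Theorems (A), (B) or (C) closes both bounds exactly as in the previous section.
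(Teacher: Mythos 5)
Your proposal is correct and follows essentially the same route as the paper: the upper bound via polynomial approximation, Theorem (A), and naturality (Proposition \ref{cyl}(5)), and the lower bound via a pair whose second member has a Jordan-curve zero set in the slice $y=0$ on which $\widehat f$ is a translate of the identity, contradicting Theorem (C) under reducibility. The paper simply makes the concrete choice $f(z,t)=z+it$, $g(z,t)=z^{2}-(1-t^{2})$ (i.e.\ $\phi(t)=\sqrt{1-t^{2}}$ and $c=0$ in your template), so that $J$ is the vertical unit circle and $\widehat f|_{J}$ is exactly the identity.
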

\begin{proof}
We first show that $\bsr  {\rm Cyl}(\D)\geq 2$.
Let $f(z,t)=z+it$ and $g(z,t)=z^2-(1-t^2)$. Then $(f,g)\in U_2({\rm Cyl}(\D))$, because
$$(z+it)(z-it)-g(z,t)=1.$$
 \begin{figure}[h]
   \hspace{3cm}
   \scalebox{.60} {\includegraphics{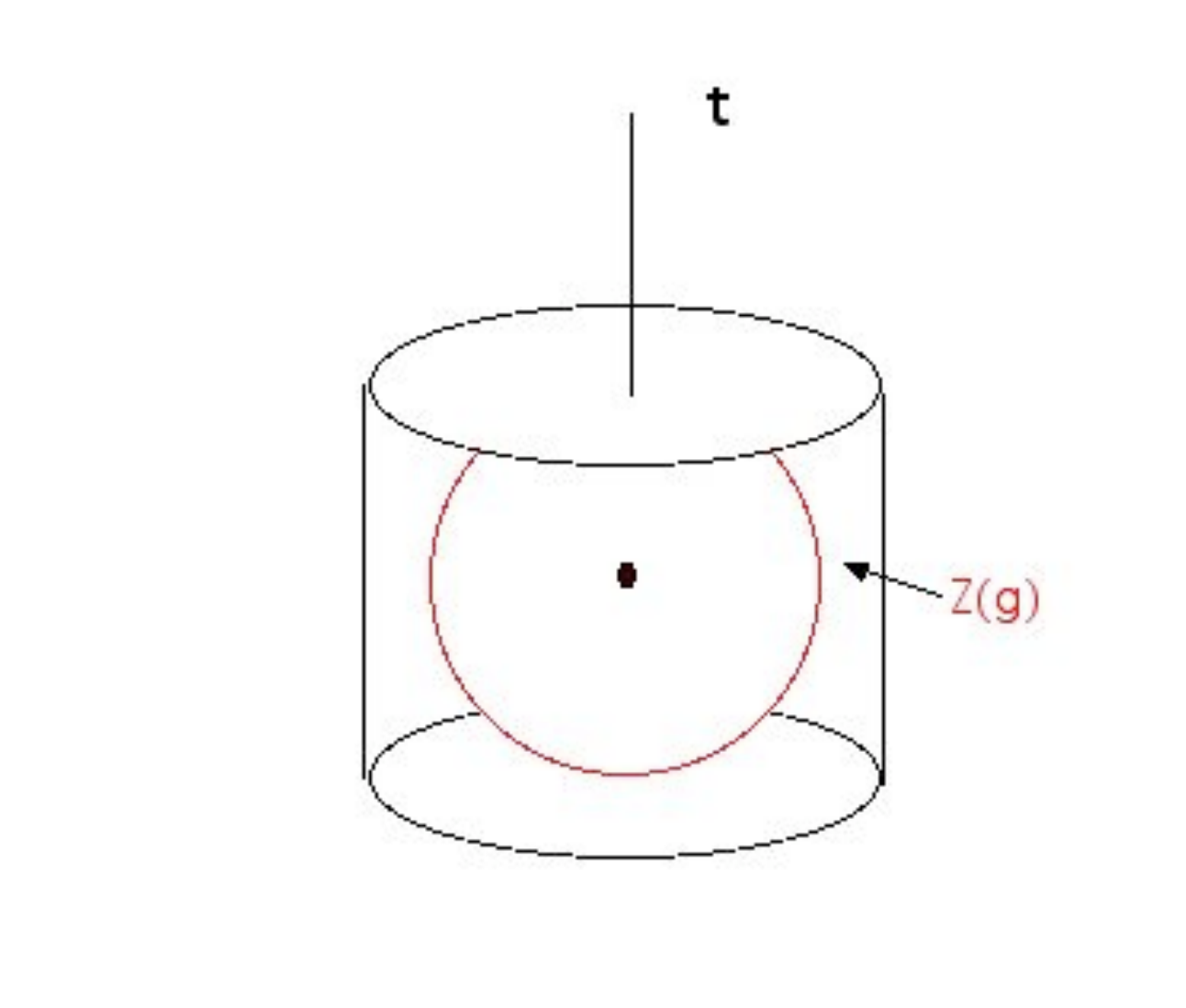}} 
\caption{The spectrum of the cylinder algebra}
\end{figure}
Suppose that $(f,g)$ is reducible. Then $u:=f+ ag$ is a zero-free function  on ${\bf D}\times [-1,1]$
for some $a\in {\rm Cyl}(\D)$. 
Now the zero set 
$$Z(g)=\big\{\big(\pm \sqrt{1-t^2},t\big): -1\leq t\leq 1\big\}=\{(x,y,t)\in \R^3: y=0, x^2+t^2=1\}$$
is a (vertical) circle. Restricting $f$ to $Z(g)$, and using complex 
coordinates $w$ on the disk $D$ formed by $Z(g)$, we obtain with 
$w=\pm  \sqrt{1-t^2}+it$ , that 
$$F(w):=f(\pm \sqrt{1-t^2}, t)= \pm\sqrt{1-t^2}+it=w.$$
Thus $f$ is the identity mapping on the circle $Z(g)$ and $u|_D$ is a zero-free extension
of $f|_{Z(g)}$. This is a contradiction to Theorem (C).\\

Next we prove that $\tsr {\rm Cyl}(\D)\leq 2$. Let $(f,g)\in {\rm Cyl}(\D)^2$. According to
Proposition \ref{cyl}, let $\bs F:=(p,q)\in (\C[z,t])^2$ be chosen so that 
$$||p-f||_\infty+||q-g||_\infty<\e.$$
By Theorem (A), $\bs F(\R^3)\ss\C^2$ has $4$-dimensional Lebesgue measure zero. 
Hence there is a null-sequence $(\e_n,\eta_n)$ in $\C^2$  such that 
$$(\e_n,\eta_n)\notin \bs F(\R^3).$$
Consequently, the pairs
$$(p-\e_n, q-\eta_n)$$
are invertible in ${\rm Cyl}({\D})$ by Proposition \ref{cyl} (5). Thus $\tsr {\rm Cyl}({\D})\leq 2$.

Combining both facts, we deduce that $\bsr {\rm Cyl}({\D})=\tsr {\rm Cyl}({\D})=2$.
\end{proof}

 \end{document}